\newtheorem{theorem}{Theorem}[section]
\newtheorem{lemma}{Lemma}[section]
\newtheorem{proposition}{Proposition}[section]
\theoremstyle{definition}
\newtheorem{remark}{Remark}[section]
\numberwithin{equation}{section}
\newcommand\blfootnote[1]{\begingroup\renewcommand\thefootnote{}\footnote{#1}\addtocounter{footnote}{-1}\endgroup}
\begin{document}

\title{
{\bf\Large Multiple positive solutions \\to elliptic boundary blow-up problems}}

\author{
\vspace{1mm}
\\
{\bf\large Alberto Boscaggin}
\vspace{1mm}\\
{\it\small Dipartimento di Matematica, Universit\`a di Torino}\\
{\it\small via Carlo Alberto 10}, {\it\small 10123 Torino, Italy}\\
{\it\small e-mail: alberto.boscaggin@unito.it}\vspace{1mm}\\
\vspace{1mm}\\
{\bf\large Walter Dambrosio}
\vspace{1mm}\\
{\it\small Dipartimento di Matematica, Universit\`a di Torino}\\
{\it\small via Carlo Alberto 10}, {\it\small 10123 Torino, Italy}\\
{\it\small e-mail: walter.dambrosio@unito.it}\vspace{1mm}\\
\vspace{1mm}\\
{\bf\large Duccio Papini}
\vspace{1mm}\\
{\it\small Dipartimento di Scienze Matematiche, Informatiche e Fisiche, Universit\`a di Udine}\\
{\it\small via delle Scienze 206},
{\it\small 33100 Udine, Italy}\\
{\it\small e-mail: duccio.papini@uniud.it}\vspace{1mm}}

\date{}

\maketitle

\vspace{-2mm}

\begin{abstract}

We prove the existence of multiple positive radial solutions to the sign-indefinite elliptic boundary blow-up problem
$$
\left\{
\begin{array}{ll}
\vspace{0.1cm}
\Delta u + \bigl(a^+(\vert x \vert) - \mu a^-(\vert x \vert)\bigr) g(u) = 0, & \; \vert x \vert < 1,
\\
u(x) \to \infty, & \; \vert x \vert \to 1,
\end{array}
\right.
$$
where $g$ is a function superlinear at zero and at infinity, $a^+$ and $a^-$ are the positive/negative part, respectively, of a sign-changing function $a$ and $\mu > 0$ is a large parameter. In particular, we show how the number of solutions is affected by the nodal behavior of the weight function $a$. The proof is based on a careful shooting-type argument for the equivalent singular ODE problem. As a further application of this technique, the existence of multiple positive radial homoclinic solutions to 
$$
\Delta u + \bigl(a^+(\vert x \vert) - \mu a^-(\vert x \vert)\bigr) g(u) = 0, \qquad x \in \mathbb{R}^N,
$$
is also considered.

\blfootnote{\textit{AMS Subject Classification: Primary: 35J60, Secondary: 35B09; 35B44}.}
\blfootnote{\textit{Keywords:} Boundary blow-up; Radial solutions; Indefinite weight; Shooting method.}
\end{abstract}

\section{Introduction}\label{sez-1}

In the qualitative theory of elliptic PDEs, problems of the type
$$
\left\{
\begin{array}{ll}
\vspace{0.1cm}
\Delta u + f(x,u) = 0, & \; x \in \Omega,
\\
u(x) \to +\infty, & \; x \to \partial \Omega,
\end{array}
\right.
$$
where $\Omega \subset \mathbb{R}^N$ is a smooth bounded domain, are usually referred to as \textit{boundary-blow up} problems
and they date back to Bieberback \cite{Bie16} and Rademacher \cite{Rad43}, arising from questions in Geometry and Mathematical Physics.
Further classical contributions were then given by Keller \cite{Kel57}
and Osserman \cite{Oss57}; as for more recent works, we just quote \cite{AftDelLet03,BanMar95,BanMar92,MarVer97}, referring to the introductions of \cite{Gar06,MawPapZan03} for more information 
and references on the subject. 

Our investigation is motivated by a recent paper by Garc{\'{\i}}a-Meli{\'a}n \cite{Gar11}, dealing with the existence of \emph{positive} solutions to
the boundary blow-up problem
\begin{equation}\label{blow-intro2}
\left\{
\begin{array}{ll}
\vspace{0.1cm}
\Delta u + \bigl(\epsilon a^+(x) - a^-(x)\bigr)u^p = 0, & \; x \in \Omega,
\\
u(x) \to +\infty, & \; x \to \partial \Omega,
\end{array}
\right.
\end{equation}
where $1 < p < \tfrac{N+2}{N-2}$, $\epsilon> 0$ is a real parameter and $a^+,a^-$ denote, respectively, the positive and the negative part of a sign-changing function
$a: \overline\Omega \to \mathbb{R}$. Notice that, in view of these assumptions, the equation in \eqref{blow-intro2} is \emph{superlinear indefinite}.
The existence of positive solutions, satisfying Dirichlet or Neumann boundary conditions, has been the object of extensive investigation in the last decades, starting with the pioneering papers \cite{AlaTar93,BerCapNir95} (see also for \cite{FelZan15,FelZanpp} for a wide
bibliography on the subject).
Boundary blow-up conditions, on the other hand, were taken into account in \cite{LG-SMJ-2005} and \cite{LG-JDE-2006} in order to
describe the limit profile of solutions to parabolic problems like:
\[
\begin{cases}
u_{t} - \Delta u = \lambda u + a(x) u^{p} & \text{in } \Omega \times ( 0, +\infty) \\
u = 0                                     & \text{on } \partial \Omega \times ( 0, +\infty ) \\
u(x,0) = u_{0} > 0                        & \text{in } \Omega
\end{cases}
\]
when the weight funtion $ a $ is allowed to change sign in a suitable way inside the domain $ \Omega $.
Roughly speaking, one of the main consequences of the analysis is to outline that the position of $ \lambda $ with respect to the principal
eigenvalues of the different nodal regions of the weight function $ a $ is fundamental to determine the behavior of the solutions.
We address the reader in particular to the very recent monograph \cite{LG-book-2016} for further details and a thorough discussion on the relevance of large solutions in this context.

In \cite{Gar11}, which corresponds to $ \lambda = 0 $ with respect to the discussion carried above, the main result asserts - under some additional technical assumptions - the existence of \emph{two} positive solutions to \eqref{blow-intro2} when $\epsilon > 0$ is sufficiently small.
Incidentally, we notice that, via a standard rescaling, the same result is true for positive blowing-up solutions of the equation
\begin{equation}\label{blow-intro2b}
\Delta u + \bigl(a^+(x) - \mu a^-(x)\bigr) u^p = 0, \qquad x \in \Omega,
\end{equation}
when $\mu > 0$ is large enough.

The aim of the present paper is to show that, at least when $\Omega = B$ is a ball,
a larger number of positive (radial) boundary blow-up solutions to \eqref{blow-intro2b} can arise, depending on the nodal behavior of the weight function.
For instance, as a corollary of our main results we can state the following.

\begin{theorem}\label{th-intro}
Let $B := \{ x \in \mathbb{R}^N \, : \, \vert x \vert < 1\}$ be the unit ball, let $p > 1$ and let $a: [0,1] \to \mathbb{R}$ be a sign-changing continuous function, having finitely many zeros 
in $]0,1[$ and such that $a(0) < 0$, $a(1) < 0$. Then, for $\mu > 0$ large enough the boundary blow-up problem
\begin{equation}\label{blow-intro3}
\left\{
\begin{array}{ll}
\vspace{0.1cm}
\Delta u + \bigl(a^+(\vert x \vert) - \mu a^-(\vert x \vert)\bigr)u^p = 0, & \; x \in B,
\\
u(x) \to +\infty, & \; x \to \partial B,
\end{array}
\right.
\end{equation}
has at least $2^m$ distinct positive radial solutions, where $m$ is the number of disjoint subintervals of $[0,1]$ in which $a$ is non-negative.
\end{theorem}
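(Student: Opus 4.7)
The plan is to reduce the problem to a singular radial ODE and produce the $2^m$ solutions via a shooting argument parametrized by the initial value at $r = 0$. Setting $u = u(r)$ with $r = |x|$, radial solutions to \eqref{blow-intro3} satisfy
$$u''(r) + \frac{N-1}{r}u'(r) + \bigl(a^+(r) - \mu a^-(r)\bigr) u^p = 0, \quad r \in (0,1),$$
with $u'(0) = 0$ and $u(r) \to +\infty$ as $r \to 1^-$. A Liouville-type change of variable eliminates the first-order term and leads to a singular equation of the form $\ddot v + w_\mu(t)\, v^p = 0$, with $w_\mu$ inheriting the sign pattern of $a^+ - \mu a^-$. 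Solutions regular at the origin form a one-parameter family indexed by $\alpha = u(0) > 0$; let $T(\alpha) \in (0,+\infty]$ denote the blow-up time of the corresponding solution $u_\alpha$. The goal is to exhibit $2^m$ distinct admissible values of $\alpha$ for which $T(\alpha) = 1$.

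Label the $m$ positivity intervals of $a$ as $J_1, \ldots, J_m$ and the $m+1$ intervening negativity intervals as $I_0, I_1, \ldots, I_m$, arranged in the order $I_0, J_1, I_1, \ldots, J_m, I_m$ on $[0,1]$, with $0 \in I_0$ and $1 \in \overline{I_m}$ since $a(0), a(1) < 0$. On each negativity interval, for $\mu$ large the equation is strongly convex-up in $u$: solutions entering with relatively large values or positive slopes tend to blow up before exiting, while solutions entering suitably low traverse the interval with almost no change. Thus the transfer map between the left and right endpoints of each $I_j$ acts as a strong selector when $\mu \gg 1$: most trajectories either explode inside $I_j$ or are squeezed into a narrow window of states at the exit. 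On each positivity interval $J_k$, by contrast, the equation is superlinear indefinite with positive weight, and classical phase-plane estimates imply that, depending on the amplitude of the entering data, the solution either remains small across $J_k$ or develops a pronounced peak and returns close to the horizontal axis before the next negativity interval.

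Combining these ingredients, I expect to code the shooting parameters by binary strings of length $m$: on each $J_k$ the solution either stays small (symbol $0$) or develops a large excursion (symbol $1$), the dichotomy being made robust by the filtering action of the flanking negativity intervals for $\mu$ large. Each such string should determine a subinterval of values of $\alpha$ on which $T(\alpha)$ is continuous and varies from values strictly less than $1$ (corresponding to an early internal blow-up) to values strictly greater than $1$ (corresponding to a solution that stays bounded longer or even globally). An intermediate-value argument then produces, for each of the $2^m$ strings, at least one $\alpha$ with $T(\alpha) = 1$, yielding $2^m$ distinct positive radial solutions of \eqref{blow-intro3}.

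The main obstacle is to implement the filtering behavior on the $I_j$'s quantitatively and uniformly in $\mu$: one must prove sharp estimates showing that, for $\mu$ large, a wide range of entry data is compressed to a narrow exit window (with the remainder causing internal blow-up). This \emph{contraction plus selection} mechanism on the negativity intervals is what decouples the dynamics on consecutive positivity intervals and makes the binary coding rigorous. Once it is in place, the $2^m$ count reduces to bookkeeping on the shooting map combined with standard intermediate-value arguments.
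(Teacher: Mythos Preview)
Your overall strategy---shoot from $r=0$ with parameter $\alpha=u(0)$, code solutions by a binary string recording ``small'' versus ``large excursion'' on each positivity interval $J_k$, and use the negativity intervals with $\mu\gg 1$ as filters---is exactly the picture the paper implements. The difference lies in how the two key steps are carried out, and your version of the second step has a real gap.

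First, the paper makes the binary coding precise not by informal amplitude dichotomies but via a phase-plane \emph{stretching-along-paths} lemma (Proposition~\ref{lemma-sap}): one fixes a topological rectangle $\mathcal{R}(r,R)$ in the $(v,v')$-plane and shows that, for $\mu$ large, the flow over each block $[\sigma_i,\sigma_{i+1}]$ (one positivity interval followed by one negativity interval) sends any path joining the ``left'' and ``right'' sides of $\mathcal{R}(r,R)$ to \emph{two} sub-paths with the same property. Iterating $m$ times yields $2^m$ sub-paths, all contained in $\mathcal{R}(r,R)$ at time $\omega_m'<1$. This is the rigorous form of your ``contraction plus selection'' mechanism; note that it is stated for paths in a fixed rectangle, which automatically controls the intermediate blow-ups you would otherwise have to rule out by hand.

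Second---and this is where your outline breaks---the paper does \emph{not} use the blow-up time $T(\alpha)$ together with the intermediate value theorem. The map $\alpha\mapsto T(\alpha)$ is not continuous: along any of your coded subintervals the solution can pass from ``blows up before $r=1$'' to ``never blows up'' (for instance by hitting the axis $v=0$ and becoming linear under the extension $g_0(v)=g(v^+)$), so $T$ can jump from a finite value to $+\infty$ without taking the value~$1$. The paper circumvents this by working at a \emph{fixed} time $\omega_m'\in(\tau_m,1)$ and invoking a result of Mawhin--Papini--Zanolin (Lemma~\ref{lem:mapaza}): there is an unbounded continuum $\Gamma_\mu\subset\{v\ge 0\}$ of initial data at $t=\omega_m'$ whose solutions blow up exactly at $t=1^-$, and for $\mu$ large this continuum crosses $\mathcal{R}(r,R)$ from the ``top'' side to the ``bottom'' side (Proposition~\ref{pro:continuo}). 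A planar intersection lemma then forces each of the $2^m$ left-to-right sub-paths to meet $\Gamma_\mu$, producing the $2^m$ solutions. If you want to salvage your IVT idea, you would need to replace $T(\alpha)$ by a quantity that \emph{is} continuous on each coded subinterval and still detects blow-up at $r=1$; the continuum $\Gamma_\mu$ is precisely the device that does this.
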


Notice that the sub-criticality assumption $p < \tfrac{N+2}{N-2}$ here is not necessary. Similar results can be proved even
when $a(0) \geq 0$, requiring however some restrictions on the exponent $p$, depending on the behavior of $a$ near $r = 0$. All this seems to suggest that the role of the critical exponent in superlinear indefinite problems can be quite subtle, thus deserving future investigations. We refer to Section \ref{sez-3} for more general versions of the result, dealing with positive boundary blow-up solutions to $\Delta u + (a^+(\vert x \vert) - \mu a^-(\vert x \vert)) g(u) = 0$
under milder assumptions of $a$ and $g$.

Theorem \ref{th-intro} can be interpreted as the boundary blow-up version of some recent results providing high multiplicity of positive solutions
for boundary value problems associated with superlinear indefinite equations, on a line of research motivated by a conjecture by 
G{\'o}mez-Re{\~n}asco and L{\'o}pez-G{\'o}mez \cite{GomLop00} and later initiated by Gaudenzi, Habets and Zanolin \cite{GauHabZan03}. 
Indeed, in \cite{GauHabZan03} the existence of three positive radial solutions to the Dirichlet problem
$$
\left\{
\begin{array}{ll}
\vspace{0.1cm}
\Delta u + \bigl(a^+(\vert x \vert) - \mu a^-(\vert x \vert)\bigr)u^p = 0, & \; x \in B,
\\
u(x) = 0, & \; x \in \partial B,
\end{array}
\right.
$$
is proved when the weight function $a$ has two intervals of positivity and $\mu > 0$ is sufficiently large. Generalizations were then given
(in various directions, both for ODEs and PDEs, with different growth assumptions and boundary conditions) in \cite{BarBosVer15,BonGomHab05,Bos11,BosFelZan->,FelZan15,FelZanpp,GauHabZan04,GirGom09}
and it is nowadays well understood that $2^m$-multiplicity comes from the possibility of prescribing a-priori (in a singular perturbation
spirit) the behavior of a positive solution in each of the $m$ regions 
of positivity of the weight function: either it is small (actually, arbitrarily close to zero for $\mu \to +\infty$) or it is close to a positive radial solutions to $\Delta u + a^+(\vert x \vert) u^p = 0$ satisfying Dirichlet conditions
at the boundary of the region. Our work thus shows that a similar picture arises when considering the boundary blow-up problem \eqref{blow-intro3}.
 
For the proof of Theorem \ref{th-intro}, we adopt a dynamical systems approach analogous to that employed in \cite{LGTeZa-CPPAA-2014}
where multiple positive solutions are detected for the problem
\[
\begin{cases}
-u'' = \lambda u + a(t) u^{p} & \in (0,1) \\
u(0) = u(1) = M
\end{cases}
\]
where $ M \in \left( 0, +\infty \right] $, $ \lambda < 0 $ and $ a $ is
a symmetric piecewise constant weight function such that
\[
a(t) =
\begin{cases}
- c & \text{for } t \in \left[ 0, \alpha \right) \cup \left( 1-\alpha, 1 \right] \\
b   & \text{for } t \in \left[ \alpha, 1-\alpha \right]
\end{cases}
\]
with $ b, c > 0 $.
There, the multiplicity is mainly a consequence of the fact that for $ \lambda < 0 $ and for $ t \in [ \alpha, 1-\alpha ] $ the equation is
autonomous and has a positive equilibrium which is a local centre.

In our case, we first write (with the usual abuse of notation, $u(x) = u(r)$ for $r = \vert x \vert$) the radial problem associated with \eqref{blow-intro3} as the singular ODE problem
\begin{equation}\label{blowODE}
\left\{
\begin{array}{ll}
\vspace{0.1cm}
\bigl(r^{N-1} u'\bigr)' + r^{N-1}\bigl(a^+(r) - \mu a^-(r)\bigr)u^p = 0, & \; 0 < r < 1,
\\
u'(0) = 0, \quad \lim_{r \to 1^-}u(r) = +\infty, & 
\end{array}
\right.
\end{equation}
and we then develop a shooting-type technique to find positive solutions to \eqref{blowODE}.
More precisely, passing to the phase-plane, we look for intersections between the forward image (from $r = 0$ to $r= 1 - \epsilon$)
of the positive $x$-semiaxis with a planar continuum made by initial conditions (for $r= 1 -\epsilon$) of solutions blowing-up at $r=1^-$.
The construction of such a continuum is borrowed from the paper \cite{MawPapZan03}; here, however, some further localization properties when
varying the parameter $\mu$ are needed.
Instead, the study of the dynamics from $r=0$ to $r=1-\epsilon$ 
is based on an auxiliary lemma, describing the action of the flow map associated with the equation in \eqref{blowODE}
on an interval where $a$ changes sign exactly once.
Differently from \cite{LGTeZa-CPPAA-2014}, in our case the equation does not have a positive equilibrium which the solutions can turn around.
However roughly speaking, in a ``Stretching Along Paths'' spirit (see \cite{DamPap04,MedPirZan09,PapZan02,PaZa-RSMUPT-07}), we are able to show that any path crossing a suitable topological rectangle of the phase-plane gives rise, through the flow map, to $2$ sub-paths with the same property.
Via an iterative use of this result and of the multiple change of sign of the weight function $ a $, we can finally prove that $2^m$ intersections between the image of the positive $x$-semiaxis and the blow-up continuum arise, therefore yielding the existence of $2^m$ positive solutions to \eqref{blowODE}.  

Let us emphasize that this technical lemma seems to be flexible enough to be used in various different situations. For instance, at the end of the paper we will show how it can be applied, together with the Conley-Wa\.zewski's method \cite{Con75,Waz47}, to the search of multiple positive (radial) homoclinic solutions to
\begin{equation}\label{eq-homintro}
\Delta u + \bigl(a^+(\vert x \vert) - \mu a^-(\vert x \vert)\bigr) u^p = 0, \qquad x \in \mathbb{R}^N,
\end{equation}
provided $a$ is negative at infinity (with $\mu > 0$ large enough). 
\medbreak
The plan of the paper is the following. In the final part of this introduction we collect
and comment the main assumptions on the nonlinear term used throughout the paper and we fix some further notation.
Section \ref{sez-2} is devoted to the preliminary topological results: more precisely, in Subsection \ref{sez-2.1} we state and prove the stretching-type lemma,
while in Subsection \ref{sez-2.2} we deal with existence and localization of the continuum of blowing-up solutions.
Then, in Section \ref{sez-3} we state and prove our main results. Finally, in Section \ref{sez-4} we briefly discuss some related results (including the existence
of multiple positive radial homoclinic solutions to \eqref{eq-homintro}) which can be easily obtained with minor modifications of the arguments developed in the paper. 

\subsection{The main assumptions}

As already anticipated, our main results deal with blowing-up radial solutions to 
\begin{equation}\label{eq-hp}
\Delta u + \bigl(a^+(\vert x \vert) - \mu a^-(\vert x \vert)\bigr) g(u) = 0,
\end{equation}
with $g: \mathbb{R}^+ \to \mathbb{R}$ a locally Lipschitz continuous functions satisfying the sign condition
$$
g(0) = 0 \quad \mbox{ and } \quad g(u) > 0, \; \mbox{ for any } \, u > 0, \leqno{(g_*)}
$$
as well as some further superlinearity assumptions.
For convenience, we list and name here some of these hypotheses, since they will be used several times during the paper,
both in connection with \eqref{eq-hp} and with related equations. Precisely, we set
$$
\lim_{u \to 0^+}\frac{g(u)}{u} = 0; \leqno{(g_0)}
$$
$$
\lim_{u \to +\infty}\frac{g(u)}{u} = +\infty; \leqno{(g_{\infty})}
$$
and, defining $G(u) := \int_0^u g(\xi)\,d\xi$ (notice that,
in view of $(g_*)$, $G$ is strictly positive for $u > 0$ and stricly increasing),
$$
\int_1^{+\infty}\frac{d\xi}{\sqrt{G(\xi)}} <+\infty \quad \mbox{ and } \quad
\lim_{u \to +\infty} \int_u^{+\infty}\frac{d\xi}{\sqrt{G(\xi)-G(u)}} = 0. \leqno{(g_{\infty}^*)}.
$$
Some comments are in order. Conditions $(g_0)$ and $(g_\infty)$ just express the fact that $g$ is superlinear
at zero and at infinity, respectively. Instead, hypothesis $(g_\infty^*)$ is typical of boundary blow-up problems.
Precisely, the integrability at infinity of $1/\sqrt{G}$ is the well known Keller-Osserman condition 
(from the pioneering works \cite{Kel57,Oss57}) while the second condition can be interpreted as a time-map assumption for the autonomous ODE
\begin{equation}\label{autonomous}
u'' - g(u) = 0.
\end{equation}
Indeed, it is easily checked that it holds true if and only if
$$
\lim_{c \to -\infty}\sqrt{2}\int_{G^{-1}(-c)}^{+\infty}\frac{d\xi}{\sqrt{G(\xi)+c}} = 0
$$
where the above integral is the total time needed for the solution of (\ref{autonomous}) to run along the orbit
passing through the point $(G^{-1}(-c),0)$.
An extensive discussion about condition $(g_\infty^*)$ can be found in \cite[Appendix]{PapZan00}, where some sufficient conditions for it to hold are
presented, as well. Here, we just recall that $(g_\infty^*)$ is implied by $(g_\infty)$, together with
$$
\int_1^{+\infty}\frac{d\xi}{\sqrt{G(\xi)}} <+\infty \quad \mbox{ and } \quad \liminf_{u \to +\infty}\frac{G(\sigma u)}{G(u)} > 1,  \mbox{ for some } \sigma > 1.
$$
In particular, the model nonlinearity $g(u) = u^p$ satisfies all the above conditions $(g_*)$, $(g_0)$, $(g_\infty)$ and $(g_\infty^*)$
whenever $p > 1$.
\medbreak
\medbreak
\noindent
\textbf{Notation}: For a vector $z = (x,y) \in \mathbb{R}^2$, $\vert z \vert := \sqrt{x^2 + y^2}$ denotes its Euclidean norm. Moreover, 
$Q_i \subset \mathbb{R}^2$ is the (closed) $i$-th quadrant of the plane.

\section{Preliminary results}\label{sez-2}

In this section, we establish the fundamental topological results which will be needed for the proof of our main theorems.
In more detail, Subsection \ref{sez-2.1} is devoted to an auxiliary stretching-type lemma
(to be applied to the study of the dynamics of the equation in \eqref{blowODE} on the interval $[0,1-\epsilon]$),
while in Subsection \ref{sez-2.2} we show existence and localization properties of a continuum of blowing-up solutions.
In view of other possible applications of these results (see Remark \ref{rem-fz} and Section \ref{sez-4})
and in order to keep the exposition to a simpler technical level, 
throughout the section we deal with a regular second order ODE of the type
$$
v'' + q(t)g(v) = 0.
$$
The possibility of applying these results (via a suitable change of variables) 
also to the singular equation in \eqref{blowODE} will be directly discussed along the proof of the main theorems
(see Section \ref{sez-3}).

\subsection{A stretching-type lemma}\label{sez-2.2}

The aim of this section is to prove a stretching-type result for the flow map associated with an equation of the type
$$
v'' + \bigl( b^+(t) - \mu b^-(t)\bigr) g(v) = 0, 
$$
where $b$ is a function which changes sign exactly once.
More precisely, throughout this section we assume that $g: \mathbb{R}^+ \to \mathbb{R}$ is a locally Lipschitz continuous function satisfying $(g_*)$, $(g_0)$ and $(g_\infty)$ and that $b: [\sigma,\omega] \to \mathbb{R}$ is a continuous function such that:
\begin{itemize}
\item [$(b_{*})$]
\textit{there exists $\tau \in \,]\sigma,\omega[$ such that
\begin{align*}
& b(t)\geq 0, \; \text{ for every } t\in [\sigma,\tau], \quad b(t)\not\equiv0 \; \text{ on } [\sigma,\tau]; \\
& b(t)\leq 0, \; \text{ for every } t\in [\tau,\omega], \quad b(t)\not\equiv0 \; \text{ on } [\tau,\omega].
\end{align*}
}
\end{itemize}
To state our result precisely, we define the following extension of $g$,
$$
g_0(v) := g(v^+), \qquad v \in \mathbb{R},
$$
and we consider the equation
\begin{equation}\label{eq-stre}
v'' + \bigl( b^+(t) - \mu b^-(t)\bigr)g_0(v) = 0.
\end{equation}
Let us now observe that, by convexity arguments, local solutions of \eqref{eq-stre} can be continued on the whole $[\sigma,\tau]$; accordingly, we define the 
flow map (notice that such a map is independent on $\mu$, since $b^-(t) = 0$ for $t \in [\tau,\sigma]$)
\begin{equation} \label{def-phi-1}
\varphi_{\sigma,\tau}: \mathbb{R}^2 \to \mathbb{R}^2, \qquad z \mapsto (v(\tau;\sigma,z),v'(\tau,\sigma,z)),
\end{equation}
where $v(\cdot;\sigma,z)$ is the solution of \eqref{eq-stre} satisfying the initial condition 
$$
(v(\sigma;\sigma,z),v'(\sigma;\sigma,z)) = z.
$$ 
Moreover, let also 
$$
\varphi^\mu_{\tau,\omega}: \mathcal{D}_\mu \subset \mathbb{R}^2 \to \mathbb{R}^2, \qquad z \mapsto (v(\omega;\tau,z),v'(\omega;\tau,z)),
$$
where $v(\cdot;\tau,z)$ is the solution of \eqref{eq-stre} satisfying the initial condition 
$$
(v(\tau;\tau,z),v'(\tau;\tau,z)) = z.
$$ 
In the above definition, $\mathcal{D}_\mu \subset \mathbb{R}^2$ is the open set of all the points $ z \in \mathbb{R}^{2} $
such that the solution $v(\cdot;\tau,z)$ exists at least on $[\tau,\omega]$.
Finally, we define
$$
\varphi^\mu_{\sigma,\omega}: \mathcal{D}'_\mu := \varphi_{\sigma,\tau}^{-1}(\mathcal{D}_\mu) \subset \mathbb{R}^2 \to \mathbb{R}^2 \qquad z \mapsto \varphi^\mu_{\tau,\omega}\left( \varphi_{\sigma,\tau}(z)\right).
$$
Of course, $\varphi^\mu_{\sigma,\omega}(z)$ is nothing but the value $(v(\omega),v'(\omega))$ for the solution to
\eqref{eq-stre} satisfying $(v(\sigma),v'(\sigma)) = z$ and,
moreover, $\mathcal{D}'_\mu \subset \mathbb{R}^2$ is an open set, as well.

As a final step, we define, for $0 < r < R$, the sets
\begin{equation}\label{def-ret}
\mathcal{R}(r,R) := \left\{ z \in Q_1 \, : \, \vert z \vert \leq R \right\} \cup \left\{ z \in Q_4 \, : \, \vert z \vert \leq r \right\}
\end{equation}
and
\begin{align*}
\mathcal{R}_{\text{left}}(r,R) & := \{ (0,y) : - r \leq y \leq 0 \} \subset \partial\mathcal{R}(r,R) \\
\mathcal{R}_{\text{right}}(r,R) & := \{ z \in Q_1 : \vert z \vert = R \} \subset \partial\mathcal{R}(r,R) \\
\mathcal{R}_{\text{top}}(r,R) & := \{ (0,y)  : 0 \leq y \leq R \} \subset \partial\mathcal{R}(r,R) \\
\mathcal{R}_{\text{bot}}(r,R) & := \{ z \in Q_{4} : | z | = r \} \cup \{ (x,0) : r \le x \le R \} \subset \partial\mathcal{R}(r,R).
\end{align*}
Notice that $\mathcal{R}(r,R)$ is a \emph{topological rectangle} (that is, it is a planar set homeomorphic to a rectangle); accordingly, the boundary subsets
$\mathcal{R}_{\textnormal{left}}(r,R)$ and $\mathcal{R}_{\textnormal{right}}(r,R)$ have to be thought of as a pair of opposite sides (the ``vertical'' sides),
as well as $\mathcal{R}_{\text{top}}(r,R)$ and $\mathcal{R}_{\text{bot}}(r,R)$ which can be seen as the ``horizontal'' ones.

We are now in position to state the main result of this section, showing how paths in $\mathcal{R}(r,R)$ joining the opposite sides
$\mathcal{R}_{\textnormal{left}}(r,R)$ and $\mathcal{R}_{\textnormal{right}}(r,R)$ are transformed through the map $\varphi^\mu_{\sigma,\omega}$.
Incidentally, by a path in $\mathcal{R}(r,R)$ we simply mean a continuous function from a compact interval
(say, $[0,1]$ for simplicity) into $\mathcal{R}(r,R)$.

\begin{proposition}\label{lemma-sap}
Let $b: [\sigma,\omega] \to \mathbb{R}$ be a continuous function satisfying $(b_*)$ and let
$g: \mathbb{R}^+ \to \mathbb{R}$ be a locally Lipschitz continuous function satisfying $(g_*)$, $(g_0)$ and $(g_\infty)$.
Then, there exist $0 < r_{\sigma,\tau} < R_{\sigma,\tau}$ such that for any $r \in \,]0,r_{\sigma,\tau}]$ and $R \geq R_{\sigma,\tau}$
the following holds true: there exists $\mu^* = \mu^*([\sigma,\omega],r,R) > 0$ such that 
for any $\mu > \mu^*$ and for any continuous path $\gamma: [0,1] \to \mathcal{R}(r,R)$ satisfying
\begin{equation}\label{hp-sap}
\gamma(0) \in 
\mathcal{R}_{\textnormal{left}}(r,R), \qquad
\gamma(1) \in 
\mathcal{R}_{\textnormal{right}}(r,R),
\end{equation}
there exist $0 < \xi_1 < \eta_1 < \eta_2 < \xi_2 < 1 $ such that, for $J_1 := [\xi_1,\eta_1]$ and
$J_2 := [\eta_2,\xi_2]$,
$$
\gamma(J_i) \subset \mathcal{D}'_\mu, \qquad \varphi^\mu_{\sigma,\omega}(\gamma(J_i)) \subset \mathcal{R}(r,R),
$$
and
$$
\varphi^\mu_{\sigma,\omega}(\gamma(\xi_i)) \in \mathcal{R}_{\textnormal{left}}(r,R), \qquad 
\varphi^\mu_{\sigma,\omega}(\gamma(\eta_i)) \in \mathcal{R}_{\textnormal{right}}(r,R), 
$$
for $i=1,2$.
\end{proposition}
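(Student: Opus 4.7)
The plan is to decompose $\varphi_{\sigma,\omega}^\mu=\varphi_{\tau,\omega}^\mu\circ\varphi_{\sigma,\tau}$ and study the two sub-flows separately, exploiting the opposite natures of the equation: on $[\sigma,\tau]$ it is superlinear restoring and $\mu$-independent, while on $[\tau,\omega]$ it is trivial on $\{v\le 0\}$ (where $g_0(v)=0$) and superlinear repulsive on $\{v>0\}$, strongly amplified by the large parameter $\mu$. I would first select $r_{\sigma,\tau}$ and $R_{\sigma,\tau}$ depending only on $\varphi_{\sigma,\tau}$: via $(g_0)$ and continuous dependence at the null equilibrium, take $r_{\sigma,\tau}$ so small that solutions with $|z|\le r_{\sigma,\tau}$ stay in a prescribed small neighbourhood of the origin on $[\sigma,\tau]$; via $(g_\infty)$ and an energy argument for $v''+b^+(t)g(v)=0$, take $R_{\sigma,\tau}$ so large that solutions starting on $\mathcal{R}_{\textnormal{right}}(r,R)$ with $R\ge R_{\sigma,\tau}$ reach $\{v=0\}$ strictly before $\tau$ with uniformly large negative velocity, and then drift linearly deep into $Q_3$.

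Exploiting the shape of $\mathcal{R}(r,R)$: since $\mathcal{R}(r,R)\cap Q_4\subset\{|z|\le r\}$, continuity of $\gamma$ forces it to cross the positive $v$-axis at some $(x^*,0)$ with $0\le x^*\le r$; for such small initial data the solution stays in $\{v>0\}$ on $[\sigma,\tau]$, so $v(\tau;\sigma,\gamma(s^*))>0$; whereas $v(\tau;\sigma,\gamma(0))\le 0$ (linear motion from $\mathcal{R}_{\textnormal{left}}(r,R)$) and $v(\tau;\sigma,\gamma(1))\ll 0$ (deep $Q_3$ by the choice of $R_{\sigma,\tau}$). The continuous function $s\mapsto v(\tau;\sigma,\gamma(s))$ therefore admits outermost zeros $0<s_1<s_2<1$ with $v(\tau)>0$ on a set surrounding $s^*$. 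The flow $\varphi_{\tau,\omega}^\mu$ acts on $\{v\le 0\}$ as the affine translation $(v,v')\mapsto(v+v'(\omega-\tau),v')$ and, for $\mu$ large, drives the final image to arbitrarily large modulus in $Q_1$ uniformly on compact subsets of $\{v(\tau)\ge\delta>0\}$. The composite $F(s):=\varphi_{\sigma,\omega}^\mu(\gamma(s))$ is therefore continuous on $\mathcal{D}'_\mu\cap[0,1]$; at $s=s_i$ it equals the affine translate of $(0,v'(\tau;s_i))\in\overline{Q_3}$, while at some $s$ strictly inside $(s_1,s_2)$ its modulus exceeds $R$ in $Q_1$.

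An intermediate-value argument on each flank of the positivity region yields $\xi_i$ with $F(\xi_i)\in\mathcal{R}_{\textnormal{left}}(r,R)$ (zero of the $v$-coordinate of $F$, with $|v'(\omega)|\le r$ inherited by continuity from the affine-translate image at $s=s_i$) and a subsequent $\eta_i$ with $|F(\eta_i)|=R$ in $Q_1$. Ordering these as $\xi_1<\eta_1<\eta_2<\xi_2$ and invoking monotonicity of $|F|$ up to the level $R$ together with the sign structure of the $v$-coordinate, one obtains $\varphi^\mu_{\sigma,\omega}(\gamma(J_i))\subset\mathcal{R}(r,R)$ on the sub-intervals $J_1=[\xi_1,\eta_1]$ and $J_2=[\eta_2,\xi_2]$. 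The main obstacle is to ensure uniformly in $\gamma$ that both $\xi_i$ land precisely on $\mathcal{R}_{\textnormal{left}}(r,R)$ (requiring a uniform bound $|v'(\tau;s_i)|\le r$, which for $s_1$ follows from $|\gamma(s_1)|\le r$ but for $s_2$ calls for a more refined argument, for instance by redefining $s_2$ as the zero closest to $s^*$ with the required velocity bound); and to produce a $\gamma$-uniform $\mu^*$, which demands a positive lower bound on $\max_{[s_1,s_2]}v(\tau;\sigma,\gamma(\cdot))$ depending only on $r$, $R$, and $[\sigma,\omega]$.
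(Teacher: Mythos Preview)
Your overall strategy---decompose $\varphi^\mu_{\sigma,\omega}=\varphi^\mu_{\tau,\omega}\circ\varphi_{\sigma,\tau}$, use $(g_0)$/$(g_\infty)$ to control the first factor and the large $\mu$ to control the second---is exactly the paper's. However, the two ``obstacles'' you flag at the end are genuine gaps, and the fixes you suggest do not work.

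\textbf{Landing on $\mathcal{R}_{\textnormal{left}}(r,R)$.} Your claim that $|\gamma(s_1)|\le r$ (and hence $|v'(\tau;s_1)|\le r$) is unfounded: $s_1$ is defined as a zero of $s\mapsto v(\tau;\sigma,\gamma(s))$, which says nothing about the size of $\gamma(s_1)$; the path may well have $|\gamma(s_1)|$ anywhere in $[r,R]$. Redefining $s_2$ as ``the zero closest to $s^*$ with the required velocity bound'' begs the question of existence. The paper bypasses this entirely with a \emph{backward} argument on $[\tau,\omega]$ (its Claim~1): after truncating $g$ to a bounded $g_*$ so that the flow $\varphi^*_{\tau,\omega}$ is globally defined, one shows that for $\mu$ large every $w\in Q_4$ with $|w|>r/2$ satisfies $|(\varphi^*_{\tau,\omega})^{-1}(w)|>R^*$, where $R^*$ is a fixed upper bound for $|\varphi_{\sigma,\tau}(\gamma(s))|$. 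By contraposition, whenever the composite image lies in $Q_4$ it automatically lies in $\overline{B(0,r/2)}$. Thus one simply takes $\xi_1:=\sup\{s\le s_3:\pi_1(\varphi^*_{\sigma,\omega}(\gamma(s)))=0\}$ and the smallness of $|v'(\omega)|$ comes for free. Your ``monotonicity of $|F|$'' is also unwarranted and is replaced by this same mechanism.

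\textbf{A $\gamma$-uniform $\mu^*$.} You need a lower bound on the ``positivity'' of the intermediate path $\varphi_{\sigma,\tau}(\gamma)$ that is independent of $\gamma$; your candidate $v(\tau;\sigma,\gamma(s^*))$ can be arbitrarily small (take $x^*\to 0$). The paper instead tracks a point $H_2=\gamma(s_2)$ with $|H_2|=r$ in $Q_1$ (such a point must exist on any admissible $\gamma$), uses Lemma~\ref{sol-piccole} to place $K_2=\varphi_{\sigma,\tau}(H_2)$ in the cone $\mathcal{C}=\{x\ge 0,\ y\ge -x\}$, and uses compactness to get $|K_2|\ge r^*>0$. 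Continuing along the image path towards $K_1\in Q_3$ one finds $K_3\in\mathcal{L}=\{y=-x,\ x\ge 0\}$ with $|K_3|\ge r^*$. Claim~2 then says: for $\mu$ large, any $z\in\mathcal{L}$ with $r^*\le|z|\le R^*$ is sent by $\varphi^*_{\tau,\omega}$ past radius $R$ in $Q_1$. This is the $\gamma$-uniform mechanism you are missing.

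Finally, you do not address possible blow-up on $[\tau,\omega]$ (solutions with $v(\tau)>0$ need not reach $\omega$). The paper handles this by the truncation $g_*$ and, at the very end, a convexity argument showing that on the selected sub-intervals $J_i$ one has $v\le R^*$ on $[\tau,\omega]$, so the truncated and original flows coincide there and $\gamma(J_i)\subset\mathcal{D}'_\mu$.
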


The above statement has to be interpreted in a  ``Stretching Along Paths'' spirit
(see \cite{MedPirZan09,PapZan02}), asserting that any path in $\mathcal{R}(r,R)$ joining the opposite sides
$\mathcal{R}_{\textnormal{left}}(r,R)$ and $\mathcal{R}_{\textnormal{right}}(r,R)$ possesses two sub-paths
whose images through the map $\varphi^\mu_{\sigma,\omega}$ are still paths in $\mathcal{R}(r,R)$ joining the same
opposite sides.

We also remark that the notation $r_{\sigma,\tau}, R_{\sigma,\tau}$ is chosen to emphasize
that these values depend only (on $g$ and) on the behavior of $b$ on the interval $[\sigma,\tau]$; 
on the contrary, the constant $\mu^*([\sigma,\omega],r,R)$ depend on the behavior of $b$ on the whole $[\sigma,\omega]$. 
Henceforth, not to overload the notation we will remove the superscript $\mu$ in the maps $\varphi_{\sigma,\omega}$, $\varphi_{\tau,\omega}$.
\smallbreak
The rest of this section is devoted to the proof of Proposition \ref{lemma-sap}.
As a first step, we state two preliminary lemmas, yielding the values $r_{\sigma,\tau}$ and
$R_{\sigma,\tau}$. They are essentially well-known in the literature; however, we sketch the proof for
the sake of completeness.

\begin{lemma} \label{sol-grandi}
There exists $R_{\sigma,\tau} > 0$ such that 
$$
z \in Q_1, \; \vert z \vert \geq R_{\sigma,\tau} \quad \Longrightarrow \quad \varphi_{\sigma,\tau}(z) \in Q_3.
$$ 
\end{lemma}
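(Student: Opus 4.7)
The argument combines two elementary observations with a classical Sturm-type rotation estimate driven by $(g_\infty)$.

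On $[\sigma,\tau]$ the weight $b^{-}$ vanishes and equation \eqref{eq-stre} reduces to $v'' = -b(t) g_0(v) \leq 0$ as long as $v \geq 0$. Thus any solution starting in $Q_1$ is concave while non-negative; if it reaches $\{v = 0\}$ at some time $t_1 \in [\sigma,\tau]$, concavity forces $v'(t_1) \leq 0$, and since $g_0 \equiv 0$ on $(-\infty,0]$ the solution thereafter satisfies $v'' \equiv 0$ and moves linearly into $Q_3$, where it remains up to $t = \tau$. Hence it suffices to exhibit $R_{\sigma,\tau} > 0$ such that, for $z \in Q_1$ with $|z| \geq R_{\sigma,\tau}$, the solution $v(\cdot;\sigma,z)$ cannot stay strictly positive throughout $[\sigma,\tau]$.

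I argue by contradiction and suppose there exists a sequence $z_n = (x_n,y_n) \in Q_1$ with $|z_n| \to +\infty$ whose corresponding solutions $v_n$ stay strictly positive on $[\sigma,\tau]$. Using $(b_{*})$ I fix $[\alpha,\beta] \subset (\sigma,\tau)$ and $\delta > 0$ with $b \geq \delta$ on $[\alpha,\beta]$. The crucial preparatory step is to prove that $v_n(\alpha) \to +\infty$. Concavity of $v_n$ together with positivity at $\tau$ yields $v_n'(\alpha) \geq -v_n(\alpha)/(\tau-\alpha)$, while $v_n'(\alpha) \leq v_n'(\sigma) = y_n$. Splitting according to whether the maximum point $t_{\mathrm{max}}$ of $v_n$ lies to the right or to the left of $\alpha$, concavity together with linear interpolation between $t_{\mathrm{max}}$ and $\tau$ yields $v_n(\alpha) \geq c\, v_n(t_{\mathrm{max}})$ for a fixed $c>0$; hence $v_n(t_{\mathrm{max}}) \to +\infty$ forces $v_n(\alpha) \to +\infty$. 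Since $v_n(t_{\mathrm{max}}) \geq v_n(\sigma) = x_n$, the case $x_n \to +\infty$ is immediate. When instead $x_n$ stays bounded and $y_n \to +\infty$, a direct integration of $v_n'' = -b(t)g(v_n)$ combined with $(g_\infty)$ shows that $v_n(t_{\mathrm{max}})\to +\infty$ as well, since otherwise $\int_\sigma^{t_{\mathrm{max}}} b(s) g(v_n(s))\, ds$ would remain bounded, preventing $v_n'$ from dropping from $y_n$ down to $0$.

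Once $v_n(\alpha) \to +\infty$ is known, the contradiction follows by Sturm comparison on $[\alpha,\beta]$. By $(g_\infty)$, for any $L > 0$ there is $K > 0$ with $g(u)/u \geq L$ whenever $u \geq K$; I pick $L$ so large that $\pi/\sqrt{\delta L} < \beta - \alpha$. For $n$ large, $v_n(\alpha) \geq K$, and on every initial sub-interval $[\alpha, t']\subset[\alpha,\beta]$ where $v_n \geq K$ the inequality $v_n'' + \delta L\, v_n \leq 0$ holds. Sturm comparison with $w'' + \delta L w = 0$ then forces $v_n$ to vanish inside $[\alpha, \alpha + \pi/\sqrt{\delta L}]$ unless $v_n$ drops below $K$ first; in that second alternative, concavity (hence monotonicity of $v_n'$) implies that the large negative slope attained when $v_n$ crosses $K$ is preserved forward in time, so $v_n$ is annihilated before reaching $\tau$. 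Either way, $v_n$ vanishes in $[\sigma,\tau]$, contradicting strict positivity and yielding the required $R_{\sigma,\tau}$.

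The main obstacle, in my view, is the propagation-of-largeness step, i.e.\ guaranteeing that $|z_n|\to+\infty$ at time $\sigma$ produces $v_n(\alpha)\to+\infty$ even though the orbit might in principle shoot up rapidly and then decay before reaching $\alpha$. The combined use of concavity and positivity on the full interval $[\sigma,\tau]$ handles exactly this; after it, the rest of the proof is a routine superlinear oscillation estimate.
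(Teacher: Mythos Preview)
Your argument is correct, but it follows a genuinely different route from the paper. The paper's proof is a short angular-rotation estimate in modified polar coordinates $\sqrt{M}\,v = \rho\sin\theta$, $v' = \rho\cos\theta$: using $(g_\infty)$ one writes $\underline{b}\,g_0(x)x \geq Mx^2 - C$, chooses $M$ so large that $\tfrac{\sqrt{M}}{2}(\tau'-\sigma') > \pi$ on a subinterval $[\sigma',\tau']$ where $b \geq \underline{b}$, and then uses global continuability to pick $R$ so that $Mv^2 + v'^2 \geq 2C$ along the orbit. A direct computation then gives $\theta(\tau) - \theta(\sigma) > \pi$, which is impossible if the orbit stays in $Q_1 \cup Q_4$.

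Your approach instead exploits concavity on the positivity set, first propagating largeness from $t=\sigma$ to $t=\alpha$ via the chord estimate $v_n(\alpha) \geq c\,\max v_n$, and then forcing a zero by a Sturm-type comparison plus a slope argument. This is more hands-on and requires some case splitting (interior vs.\ boundary maximum, ``drops below $K$'' vs.\ not), but it uses only elementary tools and makes very transparent \emph{where} the zero is produced. The polar-coordinate proof is slicker and avoids all case analysis; your proof, on the other hand, would adapt more readily to situations where one wants quantitative information on the location of the first zero. Both rely on $(g_\infty)$ in the same essential way. One minor remark: in your propagation step the phrase ``preventing $v_n'$ from dropping from $y_n$ down to $0$'' tacitly assumes an interior maximum; the case $v_n' > 0$ on all of $[\sigma,\tau]$ is handled by the same boundedness-of-$v_n''$ reasoning, and you should say so explicitly.
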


\begin{proof}
Let us choose $[\sigma',\tau'] \subset [\sigma,\tau]$ in such a way that
$b(t) \geq \underline{b} > 0$ for $t \in [\sigma',\tau']$ and fix $M > 0$ such that
\begin{equation}\label{e1}
\frac{\sqrt{M}}{2}(\tau' - \sigma') > \pi.
\end{equation}
From the superlinearity assumption $(g_\infty)$ we infer the existence of $C > 0$ such that
\begin{equation}\label{e2}
\underline{b} \, g_0(x) x \geq M x^2 - C, \quad \mbox{ for any } x \geq 0;
\end{equation}
then, as a consequence of the global continuability, we find $R > 0$ such that
\begin{equation}\label{e3}
M v(t)^2 + v'(t)^2 \geq 2C, \quad \mbox{ for any } t \in [\sigma,\tau],
\end{equation}
for any solution of \eqref{eq-stre} satisfying $v(\sigma)^2 + v'(\sigma)^2 \geq R^2$.

Let now $z \in Q_1$ with $\vert z \vert \geq R$ and consider the solution $v(\cdot) = v(\cdot;\sigma,z)$
of \eqref{eq-stre} satisfying the initial condition $(v(\sigma),v'(\sigma)) = z$. Assume by contradiction that
$(v(\tau),v'(\tau)) \notin Q_3$; then by the definition of $g_0$ we easily obtain that
$(v(t),v'(t)) \in Q_1 \cup Q_4$ for any $t \in [\sigma,\tau]$. In particular,
$v(t) \geq 0$ for any $t \in [\sigma,\tau]$ and, passing to modified polar coordinates
$$
\sqrt{M} v(t) = \rho(t) \sin\theta(t) \, \qquad v'(t) = \rho(t)\cos\theta(t),
$$ 
a standard computation based on \eqref{e1}, \eqref{e2}, \eqref{e3} gives
$$
\theta(\tau) - \theta(\sigma) = \sqrt{M} \int_{\sigma}^{\tau} \frac{v'(t)^2 + b(t)g_0(v(t))v(t)}{M v(t)^2 + v'(t)^2}\,dt > \pi,
$$
a contradiction. The lemma is thus proved, for $R_{\sigma,\tau}:= R$.
\end{proof}

We now consider the half-line
$$
\mathcal{L} := \{(x,y) \in \mathbb{R}^2 \, :  x \geq 0, \, y = - x \}
$$ 
and the cone
$$
\mathcal{C} := \{(x,y) \in \mathbb{R}^2 \, :  x \geq 0, \, y \geq - x  \}.
$$
\begin{lemma} \label{sol-piccole}
There exists $r_{\sigma,\tau} \in \,]0,R_{\sigma,\tau}[$ such that
$$
z \in Q_1, \; \vert z \vert \leq r_{\sigma,\tau} \quad \Longrightarrow \quad \varphi_{\sigma,\tau}(z) \in \mathcal{C}.
$$ 

\end{lemma}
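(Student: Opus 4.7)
The idea is to treat the nonlinear flow from $\sigma$ to $\tau$ as a small perturbation of the linear flow of $v''=0$, which is the formal linearization of the equation at $v=0$ (by $(g_0)$, $g_0$ is right-differentiable at $0$ with derivative $0$). Since $b^{-}\equiv 0$ on $[\sigma,\tau]$, \eqref{eq-stre} reduces there to $v''+b(t)g_0(v)=0$, and the variation of constants formula gives, with $L:=\tau-\sigma$,
$$
v(t)=v_0+v_0'(t-\sigma)-\int_\sigma^t(t-s)b(s)g_0(v(s))\,ds, \qquad v'(t)=v_0'-\int_\sigma^t b(s)g_0(v(s))\,ds.
$$
The linear image of $z=(v_0,v_0')\in Q_1$ is $(v_0+v_0'L,\,v_0')\in Q_1\subset\mathcal{C}$, so the goal is to show that, for $|z|$ small, the nonlinear correction cannot push this image out of $\mathcal{C}$.

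First, a standard Gronwall bootstrap, based on the local Lipschitz continuity of $g_0$ near $0$ (with $g_0(0)=0$), produces constants $K,\delta_0>0$ such that every solution with $|z|\leq\delta_0/K$ stays in a $K|z|$-neighborhood of the origin throughout $[\sigma,\tau]$. This localization is what allows $(g_0)$ to be invoked pointwise: given $\eta>0$, pick $\delta_\eta>0$ with $g_0(v)\leq\eta v^{+}$ on $[0,\delta_\eta]$, and restrict to $|z|\leq\delta_\eta/K$, so that $g_0(v(s))\leq\eta K|z|$ for all $s\in[\sigma,\tau]$. Substituting into the integral formulas and writing $B:=\max_{[\sigma,\tau]}b$, one gets
$$
v(\tau)\geq (v_0+v_0'L)-\tfrac{1}{2}\eta BKL^2|z|, \qquad v(\tau)+v'(\tau)\geq v_0+v_0'(L+1)-\eta BKL\bigl(1+\tfrac{L}{2}\bigr)|z|.
$$

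For $z\in Q_1$ a direct minimization on the unit quarter-circle gives the lower bounds $v_0+v_0'L\geq c_L|z|$ with $c_L:=\min(1,L)>0$, and $v_0+v_0'(L+1)\geq|z|$. Choosing $\eta$ small enough that the two nonlinear corrections above are dominated by half of these linear lower bounds, and setting $r_{\sigma,\tau}:=\min(\delta_\eta/K,R_{\sigma,\tau}/2)$, one concludes $v(\tau)>0$ and $v(\tau)+v'(\tau)>0$, i.e.\ $\varphi_{\sigma,\tau}(z)\in\mathcal{C}$; the case $z=0$ is immediate since the identically zero solution belongs to $\mathcal{C}$.

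The only (rather minor) technical point is the coupling between the Gronwall bound, needed to localize the trajectory so that $(g_0)$ can be used pointwise, and the subsequent choice of $\eta$, which must be small enough to overpower the linear lower bounds $c_L|z|$ and $|z|$; once the constants are fixed in the correct order ($\delta_0\mapsto K\mapsto\eta\mapsto\delta_\eta\mapsto r_{\sigma,\tau}$), the estimates become elementary.
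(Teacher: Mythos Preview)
Your proof is correct and takes a genuinely different route from the paper's. The paper argues via modified polar coordinates: writing $\sqrt{\bar b\epsilon}\,v=\rho_\epsilon\sin\theta_\epsilon$, $v'=\rho_\epsilon\cos\theta_\epsilon$ with $\bar b=\max_{[\sigma,\tau]}b$, it uses $(g_0)$ in the form $|g_0(v)v|\leq\epsilon v^2$ for $|v|$ small to bound the angular displacement $\theta_\epsilon(\tau)-\theta_\epsilon(\sigma)\leq\sqrt{\bar b\epsilon}(\tau-\sigma)<\pi/4$, which, together with $\bar b\epsilon<1$, confines the endpoint to $\mathcal{C}$. Your approach instead linearizes directly through the variation-of-constants formula, bounding the nonlinear integral correction by $O(\eta|z|)$ and comparing it against the explicit linear lower bounds $v_0+v_0'L\geq\min(1,L)|z|$ and $v_0+v_0'(L+1)\geq|z|$ on $Q_1$. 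Both arguments exploit the same underlying fact---that the origin is a degenerate equilibrium by $(g_0)$, so the local flow is a small perturbation of $v''=0$---but yours is more elementary, avoiding the polar-coordinate machinery at the price of tracking several explicit constants. The paper's angular approach, on the other hand, parallels the proof of the companion Lemma~\ref{sol-grandi} and adapts more transparently to the singular radial equation treated in Remark~\ref{sol-piccole-rad}, where the extra term $\frac{\alpha}{t}v'$ has a sign that fits naturally into the angular-velocity estimate.
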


\begin{proof} 
Let $\bar b=\max_{[\sigma,\tau]} b$ and fix $\epsilon >0$ such that
\begin{equation} \label{aa1}
\sqrt{{\bar b} \epsilon }\  (\tau -\sigma)<\dfrac{\pi}{4}
\end{equation}
and
\begin{equation} \label{aa1bis}
{\bar b} \epsilon <1.
\end{equation}
From assumption $(g_0)$ we deduce that there exists $r'>0$ such that
$$
|x|\leq r'\quad \Rightarrow \quad |g_0(x) x| \leq \epsilon x^2;
$$
then, a continuous dependence argument ensures the existence of $r > 0$ such that
$$
|z|\leq r \quad \Rightarrow \quad |v(t)|\leq r',\quad \forall \ t\in [\sigma,\tau],
$$
where $v(\cdot) = v(\cdot;\sigma,z)$ is the solution of \eqref{eq-stre} satisfying the initial condition $(v(\sigma),v'(\sigma)) = z$.
Hence, for such a solution we have
\begin{equation}\label{aa4}
|g_0(v(t)) v(t)| \leq \epsilon v(t)^2,\quad \forall \ t\in [\sigma,\tau].
\end{equation}

Let us now introduce modified polar coordinates
$$
\sqrt{{\bar b} \epsilon}\, v(t) = \rho_\epsilon(t) \sin \theta_\epsilon(t) \, \qquad v'(t) = \rho_\epsilon(t)\cos\theta_\epsilon(t).
$$ 
A standard computation based on \eqref{aa4} gives
$$
\theta_\epsilon(\tau) - \theta_\epsilon(\sigma) = \sqrt{{\bar b}\epsilon} \int_{\sigma}^{\tau} \frac{v'(t)^2 + b(t)g_0(v(t))v(t)}{{\bar b} \epsilon v(t)^2 + v'(t)^2}\,dt \leq \sqrt{{\bar b}\epsilon}\, (\tau-\sigma);
$$
hence, recalling \eqref{aa1} and the fact that $z\in Q_1$, we obtain
\[
\theta_\epsilon (\tau)\leq \dfrac{\pi}{2}+\dfrac{\pi}{4}.
\]
From this relation together with condition \eqref{aa1bis} we deduce that 
\[
\theta (\tau)\leq \dfrac{\pi}{2}+\dfrac{\pi}{4},
\] 
where $\theta$ is the usual angular coordinate (measured from the $y$-axis), showing that $(v(\tau),v'(\tau))\in \mathcal{C}$.
The lemma is thus proved, for $r_{\sigma,\tau}:= r$.
\end{proof}

\begin{remark}\label{sol-piccole-rad}
For further convenience, we observe that a version of Lemma \ref{sol-piccole} can be proved, when $[\sigma,\tau] = [0,\tau]$, also for the singular ODE
\begin{equation}\label{rad-v}
v'' + \frac{\alpha}{t} v' + b^+(t)g_0(v) = 0, \qquad t \in [0,\tau],
\end{equation}
where $\alpha > 0$. More precisely, the following holds true: \emph{there exists $d_0 > 0$, such that, for any $d \in \,]0,d_0]$, the solution 
$v_d$ of equation \eqref{rad-v} with $(v_d(0),v_d'(0)) = (d,0)$ satisfies $(v_d(\tau),v_d'(\tau)) \in \mathcal{C} \cap Q_4$}.
The proof is essentially the same; here we can estimate the angular coordinate via the formula
$$
\theta_\epsilon(t) - \frac{\pi}{2} = \theta_\epsilon(t) - \theta_\epsilon(0) = \sqrt{{\bar b}\epsilon} \int_{0}^{t} \frac{v'(s)^2 + b(s)g_0(v(s))v(s) + \frac{\alpha}{s}v(s)v'(s)}{{\bar b} \epsilon v(s)^2 + v'(s)^2}\,ds 
$$
and we have to use the fact that $v(s)v'(s) \leq 0$ as long as $(v(s),v'(s)) \in Q_4$.
\end{remark}

We are now in a position to give the proof of Proposition \ref{lemma-sap}.

\begin{proof}[Proof of Proposition \ref{lemma-sap}] 
Let us consider the numbers $R_{\sigma,\tau}$ and $r_{\sigma,\tau}$ given by Lemma \ref{sol-grandi} and Lemma \ref{sol-piccole}, respectively, and fix $r<r_{\sigma,\tau}$ and $R>R_{\sigma,\tau}$. From a compactness argument, together with the uniqueness of the trivial solution of \eqref{eq-stre}, we deduce that there exist $r^*<r$ and $R^*>R$ such that
\begin{equation} \label{bb1}
\vert \varphi_{\sigma,\tau}(z) \vert \geq r_*, \quad \mbox{ for any } \vert z \vert \geq r,
\end{equation}
and
\begin{equation} \label{bb2}
\vert \varphi_{\sigma,\tau}(z) \vert \leq R_*, \quad \mbox{ for any } \vert z \vert \leq R.
\end{equation}
Let us now define
\[
g_*(x)=
\begin{cases}
g_0(x), & \mbox{ if } x \leq R^*, \\
g_0(R^*), & \mbox{ if } x > R^*,
\end{cases}
\]
and consider the modified equation
\begin{equation}\label{eq-mod}
v'' + b_\mu(t)g_*(v) = 0
\end{equation}
together with the flow map
$$
\varphi^*_{\tau,\omega}: \mathbb{R}^2 \to \mathbb{R}^2 \qquad z \mapsto (v(\omega;\tau,z),v'(\omega;\tau,z)),
$$
where $v(\cdot;\tau,z)$ is the solution of \eqref{eq-mod} satisfying the initial condition 
$$(v(\tau;\tau,z),v'(\tau;\tau,z)) = z.
$$
We also set
\[
\varphi^*_{\sigma,\tau}=\varphi_{\sigma,\tau},\qquad \varphi^*_{\sigma,\omega}=\varphi^*_{\tau,\omega}\circ \varphi^*_{\sigma,\tau}.
\]

We claim that the following hold true.

\smallskip
\noindent
{\sc Claim 1.} There exists $\mu_1^*>0$ such that for every $\mu > \mu_1^*$ we have
$$
w\in Q_4, \vert w \vert > r/2\quad \Rightarrow \quad |(\varphi^*_{\tau,\omega})^{-1}(w)| > R^*.
$$
\smallbreak
To prove this, we first observe that, if $\delta \in \,]0,r/2]$ and $v$ is a solution of \eqref{eq-mod} satisfying
$v(\omega) \geq \delta$ and $v'(\omega) \leq 0$, then $v(t) \geq \delta$ for any $t \in [\tau,\omega]$ and
$g_*(v(t)) \geq g_*^\delta := \inf_{v \geq \delta} g_*(v) > 0$. By integrating the differential equation we then obtain
\begin{equation}\label{tecnica}
v'(\tau) \leq - \mu g_*^\delta \int_{\tau}^\omega b^{-}(s)\,ds.  
\end{equation}
We now write $w = (w_1,w_2)$ and we distinguish two cases.
If $w_1 \geq r/4$, from \eqref{tecnica} we immediately see that the thesis of the claim is true for
$$
\mu > \frac{R^*}{g_*^{r/4} \int_{\tau}^\omega b^{-}(s)\,ds}.
$$
On the other hand, if $w_1 \in [0,r/4]$ we first use a simple convexity arguments
to prove that
$$
v(\tau') \geq \delta' := \frac{\sqrt{3}}{4} r (\omega-\tau'),
$$ 
where $\tau' = (\omega+\tau)/2$.
With analogous computations as before, we can thus see that the thesis of the claim is true for
$$
\mu > \frac{R^*}{g_*^{\delta'} \int_{\tau}^{\tau'} b^{-}(s)\,ds}.
$$
\smallbreak
\noindent
{\sc Claim 2.} There exists $\mu_2^*>0$ such that for every $\mu > \mu_2^*$ we have
$$
z\in {\cal L},\ r^*\leq |z|\leq R^*\quad \Rightarrow \quad \pi_i\left(\varphi^*_{\tau,\omega}(z)\right)\geq R,\ i=1, 2,
$$
where $\pi_i(\alpha_1,\alpha_2)=\alpha_i$, $(\alpha_1,\alpha_2)\in \mathbb{R}^2$, $i=1, 2$.
\smallbreak
This can be proved exactly as in \cite[Lemma 3.5]{BosZan12} (up to choosing, without loss of generality, $\tau \in [\sigma,\omega]$
in such a way that $b(t) < 0$ in a small neighborhood to the right of $\tau$).
\smallbreak
\noindent
{\sc Final part.}
Let us define $\mu^* = \max(\mu_1^*,\mu_2^*)$ and fix $\mu > \mu^*$; let also
$\gamma: [0,1] \to \mathcal{R}(r,R)$ be a continuous function such that
$$
H_0 := \gamma(0) \in 
\mathcal{R}_{\textnormal{left}}(r,R), \qquad
H_1 := \gamma(1) \in 
\mathcal{R}_{\textnormal{right}}(r,R),
$$
Preliminarily, we observe that, by \eqref{bb2} together with elementary considerations about the sign of the vector field,
\begin{equation}\label{prel}
\vert \varphi_{\sigma,\tau}(\gamma(s)) \vert \leq R^*,
\end{equation}
and
$$
\varphi_{\sigma,\tau}(\gamma(s)) \in Q_1 \cup Q_3 \cup Q_4
$$ 
for any $s \in [0,1]$. Moreover, 
$K_0:= \varphi_{\sigma,\tau}(H_0) \in Q_3$ and, by Lemma \ref{sol-grandi}
$K_1 := \varphi_{\sigma,\tau}(H_1) \in Q_3$, as well. Now, let $s_2 \in \,]0,1[$ be the greatest value such that
$H_2 := \gamma(s_2) \in Q_1$ and $\vert H_2 \vert = r$; in view of Lemma \ref{sol-piccole} and \eqref{bb1},
$K_2 := \varphi_{\sigma,\tau}(H_2) \in \mathcal{C}$ and $r^* \leq \vert K_2 \vert \leq R_*$.
We then easily obtain the existence of $s_3 \in [s_2,1[$ such that $K_3 := \varphi_{\sigma,\tau}(\gamma(s_3)) \in \mathcal{L}$ and
$\vert K_3 \vert \geq r_*$ (see \eqref{bb1}).

Let us now focus on the interval $[\tau,\omega]$.
Again by the sign of the vector field, we see that 
$$
\varphi_{[\sigma,\omega]}^*(\gamma(s)) = \varphi_{\tau,\omega}^*(\varphi_{\sigma,\tau}(\gamma(s))) \in Q_1 \cup Q_3 \cup Q_4, \quad \mbox{ for any } s \in [0,1],
$$
and $\varphi_{\tau,\omega}^*(K_0) \in Q_3$, $\varphi_{\tau,\omega}^*(K_1) \in Q_3$; moreover, in view of Claim 2, it holds that $\vert \varphi_{\tau,\omega}^*(K_3) \vert \geq R$.
Accordingly, we can define
$$
\xi_1 := \sup \{ s \in [0,s_3] \, : \, \pi_1(\varphi_{[\sigma,\omega]}^*(\gamma(s))) = 0 \}
$$
and
$$
\eta_1 := \inf \{ s \in [\xi_1,1] \, : \, \vert \pi_1(\varphi_{[\sigma,\omega]}^*(\gamma(s))) \vert = R \}.
$$
Again by the sign of the vector field, $\varphi_{\sigma,\omega}^*(\gamma(s)) \in Q_1 \cup Q_4$ for any $s \in [\xi_1,\eta_1]$; moreover,
$\varphi_{\sigma,\omega}^*(\gamma(\xi_1))\notin Q_1$. Finally, \eqref{prel} together with Claim 1 imply that
$$
\varphi_{\sigma,\omega}^*(\gamma(s)) \in Q_4 \quad \Longrightarrow \quad \vert \varphi_{\sigma,\omega}^*(\gamma(s)) \vert \leq r/2.
$$
Hence, 
for $J_1 := [\xi_1,\eta_1]$ we have that $\varphi_{\sigma,\omega}^*(\gamma(J_1)) \subset \mathcal{R}(r,R)$
and 
$$
\varphi_{\sigma,\omega}^*(\gamma(\xi_1)) \in \mathcal{R}_{\textnormal{left}}(r,R), \qquad  
\varphi_{\sigma,\omega}^*(\gamma(\eta_1)) \in \mathcal{R}_{\textnormal{right}}(r,R).
$$

Now, let $v(\cdot)$ be the solution of \eqref{eq-mod} satisfying $(v(\tau),v'(\tau)) = \varphi_{\sigma,\tau}(\gamma(s))$ for $s \in [\xi_1,\eta_1]$. 
We see that $v(t) > 0$ for $t \in [\tau,\omega]$, hence $v$ is convex in $[\tau, \omega]$.
Since $v(\tau) \leq R^*$ and $v(\omega) \leq R$, 
we obtain $v(t) \leq R^*$ for any $t \in [\tau,\omega]$; as a consequence, $v$ solves \eqref{eq-stre} as well. This means that
$\varphi_{\sigma,\tau}(\gamma(s)) \in \mathcal{D}_\mu$ for $s \in [\xi_1,\eta_1]$ and $\varphi_{\tau,\omega}^*(\gamma(s)) = \varphi_{\tau,\omega}(\gamma(s))$,
concluding the proof of the existence of a first sub-path.

The existence of the interval $J_2$ follows from a similar argument, using the sub-path joining $K_3$ and $K_1$.
\end{proof}

\begin{figure}[!h]
\includegraphics[scale=0.6]{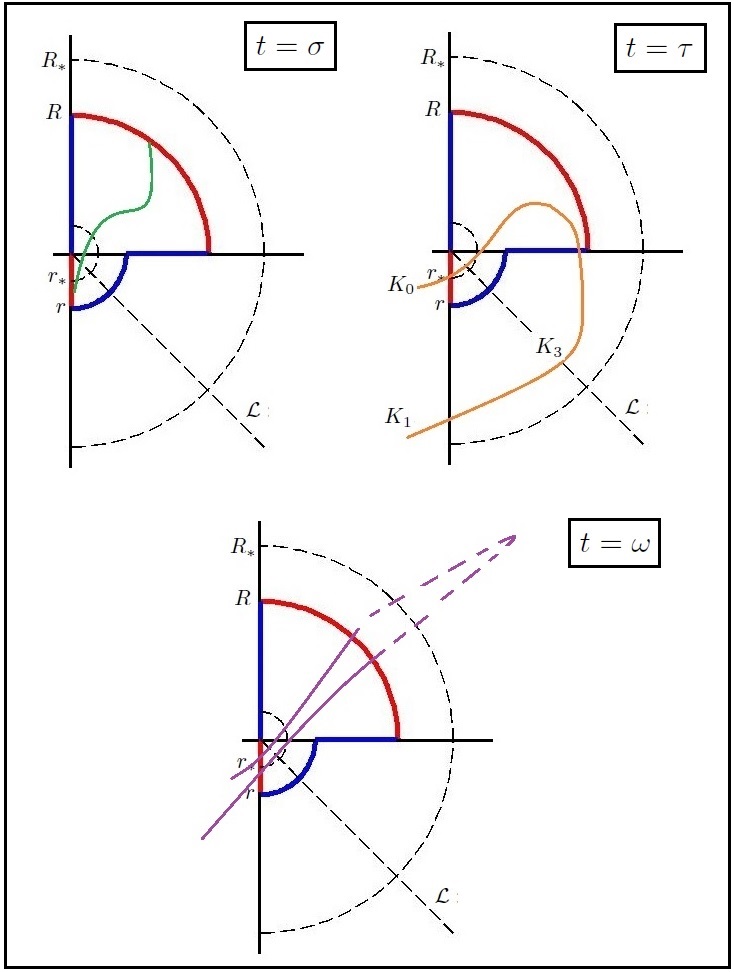}
\caption{\small{A graphical explanation of the proof of Proposition \ref{lemma-sap}. In the first figure, the ``initial'' path $\gamma$ is plotted (in green color), joining the opposite sides $\mathcal{R}_{\textnormal{left}}(r,R), \mathcal{R}_{\textnormal{right}}(r,R)$ (in red color) of the topological rectangle $\mathcal{R}(r,R)$. In the second figure, the path $\varphi_{\sigma,\tau}(\gamma)$ is plotted (in purple color) together with its relevant points $K_0, K_1$ and $K_3$. Finally, in the third figure, the ``final'' path $\varphi^{\mu}_{\sigma,\omega}(\gamma)$ is plotted (in purple color). Notice that, in general, the map $\varphi^{\mu}_{\sigma,\omega}$ is not defined on the whole image of $\gamma$ (in the figure, this fact is expressed by the dashed line); however, two sub-paths joining the opposite sides $\mathcal{R}_{\textnormal{left}}(r,R), \mathcal{R}_{\textnormal{right}}(r,R)$ can be found, in accordance with Proposition \ref{lemma-sap}.}}
\label{fig2}
\end{figure}

\begin{remark}\label{rem-fz}
We observe that Lemma \ref{lemma-sap} can be applied in an iterative way when the weight function 
$b$ changes sign a finite number of times on a compact interval.
As a consequence, by selecting as initial path a half-line
in the first quadrant and following its evolution through the flow map,
one can easily prove the existence of multiple positive solutions
to Sturm-Liouville boundary value problems (e.g., the Dirichlet and the Neumann one) 
associated with equations like $v'' + b_\mu(t) g(v) = 0$, when $\mu > 0$ is large enough.
In this way, it is possible to recover results first obtained by Gaudenzi, Habets and Zanolin \cite{GauHabZan03,GauHabZan04} 
(via a related shooting approach, but for Dirichlet boundary conditions only, and under the simplifying assumption that $b$ has two/three intervals of positivity)
and more recently by Feltrin and Zanolin \cite{FelZan15,FelZanpp} (in greater generality, but using a topological degree approach).
\end{remark}

\subsection{A continuum of blowing-up solutions}\label{sez-2.1}

\noindent
We consider here a continuous weight function $ b : [ \omega, \sigma ] \to \mathbb{R} $ such that $ b(t) \le 0 $
for all $ t \in [ \omega, \sigma ] $.
Therefore, the equation we are considering in this section becomes
\begin{equation}\label{eq:pesoneg}
v'' - \mu b^{-}(t) g(v) = 0, \qquad t \in [ \omega, \sigma ].
\end{equation}
As it is well known, conditions of superlinear growth for $ g $ at infinity like $ ( g^{*}_{\infty} ) $ imply that there are solutions of \eqref{eq:pesoneg}
that blow up in $ [ \omega, \sigma ] $, provided that $ b $ is not trivial (see also Lemma~\ref{lem:scoppiano} below).
More precisely, in \cite{MawPapZan03} it is shown that there are continua of solutions that blow up at a given time.
We recall here that result and afterwards we will give a more precise localization of those continua for large values of $ \mu $.
As before, $ \pi_{1}, \pi_{2} : \mathbb{R}^{2} \to \mathbb{R} $ stand for the orthogonal projections on the $ x $- and $ y $-axes respectively.
\begin{lemma}\label{lem:mapaza}
(\cite[Lemma 1]{MawPapZan03})
Assume $ ( g_{*} )$ and $ ( g_{\infty}^{*} ) $ and suppose that
\[
\sigma \in \overline{\{ t\in ( \omega, \sigma ) : b(t) < 0 \}}.
\]
Then, there is an unbounded continuum $ \Gamma_{\mu} \subset \left[ 0, +\infty \right) \times \mathbb{R} $, with
$ \pi_1(\Gamma_{\mu}) = \left[ 0, +\infty \right)$, such that each solution of \eqref{eq:pesoneg} with
$ ( v(\omega), v'(\omega) ) \in \Gamma_{\mu} $ satisfies $ v(t) > 0 $ for all
$ t \in ( \omega, \sigma ) $ and $ v(t) \to +\infty $ as $ t \to\sigma^{-} $.
Moreover, the localization of $ \Gamma_{\mu} $ in the phase-plane can be described as follows:
there is $ \delta > 0 $ and
\begin{itemize}
\item[(i)]
there is $ \varepsilon > 0 $ such that
$ \pi_2( \Gamma_{\mu} \cap \left[ 0, \varepsilon \right) \times \mathbb{R} ) \subset ( \delta, +\infty ) $;
\item[(ii)]
there is $ K > 0 $ such that $ \pi_2( \Gamma_{\mu} \cap ( K, +\infty ) \times \mathbb{R} ) \subset ( -\infty, -\delta) $.
\end{itemize}
\end{lemma}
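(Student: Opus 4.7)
I would realize $\Gamma_\mu$ as the common topological boundary of two disjoint open subsets of $[0,+\infty)\times\mathbb{R}$, relying on the Keller--Osserman integrability in $(g_\infty^*)$ to control blow-up. Extending $g$ by $g_0(v) := g(v^+)$ so that solutions can be continued globally as (possibly affine) functions past zero-crossings, I associate to each initial datum $z = (z_1,z_2)$ the solution $v(\cdot;z)$ of \eqref{eq:pesoneg} with $(v(\omega),v'(\omega)) = z$, and let $T(z) \in (\omega,+\infty]$ denote its maximal forward existence time. Define
\[
\mathcal{A}^+ := \{z \in [0,+\infty)\times\mathbb{R} : T(z) < \sigma\}, \qquad \mathcal{S} := \{z \in [0,+\infty)\times\mathbb{R} : T(z) > \sigma \text{ and } v(t;z) > 0 \text{ for all } t \in (\omega,\sigma]\}.
\]
Both sets are open by continuous dependence (combined with the convexity $v''\ge 0$ enjoyed by positive solutions since $b^-\ge 0$) and they are disjoint. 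The continuum $\Gamma_\mu$ will be extracted from their common boundary: each point there corresponds to a solution that remains strictly positive on $(\omega,\sigma)$ and blows up exactly at $\sigma^-$.

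Next, I would verify that each vertical slice $\{z_1\}\times\mathbb{R}$ meets both $\mathcal{A}^+$ and $\mathcal{S}$. For $z_2 \gg 0$, a time-map estimate based on the second part of $(g_\infty^*)$ bounds the blow-up time by a quantity smaller than $\sigma-\omega$, so the top of the slice lies in $\mathcal{A}^+$. For $z_2$ sufficiently negative, the convexity bound $v(t) \le z_1 + z_2(t-\omega)$ valid while $v \ge 0$ forces the solution to hit zero well before $\sigma$; a slightly less negative value of $z_2$ gives a solution that remains bounded and positive on $[\omega,\sigma]$, placing a non-empty portion of the slice in $\mathcal{S}$. A classical connectedness theorem of Kuratowski--Whyburn type, applied to the locally compact space $[0,+\infty)\times\mathbb{R}$ with the two disjoint open sets $\mathcal{A}^+$ and $\mathcal{S}$, then produces a closed connected set $\Gamma_\mu$ separating them with $\pi_1(\Gamma_\mu) = [0,+\infty)$; its unboundedness follows from the same separation argument applied along slices with $z_1 \to +\infty$. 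This is essentially the strategy of \cite[Lemma 1]{MawPapZan03}.

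For the localization properties (i) and (ii) I would sharpen the previous estimates quantitatively. For (i), continuous dependence upon the trivial zero solution supplies $\varepsilon,\delta > 0$ such that every datum $(z_1,z_2)$ with $z_1 \in [0,\varepsilon)$ and $z_2 \in [0,\delta]$ produces a solution uniformly small on $[\omega,\sigma]$ and hence finite at $\sigma$; such data belong to $\mathcal{S}$, so $\Gamma_\mu \cap ([0,\varepsilon)\times\mathbb{R})$ has second coordinate $>\delta$. For (ii), the vanishing of the time-map in $(g_\infty^*)$ furnishes $K>0$ and the same $\delta$ (up to shrinking) such that every $(z_1,z_2)$ with $z_1 > K$ and $z_2 > -\delta$ gives blow-up time less than $\sigma-\omega$ and thus lies in $\mathcal{A}^+$, hence outside $\Gamma_\mu$.

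The main technical obstacle is the connectedness step: one must guarantee that the complement of $\mathcal{A}^+ \cup \mathcal{S}$ contains an \emph{unbounded} component meeting every vertical line, rather than a bounded or multiply disconnected set. This requires a careful application of the Kuratowski separation lemma in a non-compact setting, together with uniform control (as $z_1 \to +\infty$) of how $\mathcal{A}^+$ and $\mathcal{S}$ sit relative to one another on vertical slices; the detailed execution is carried out in \cite{MawPapZan03}.
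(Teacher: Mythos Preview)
The paper does not prove this lemma but simply quotes it from \cite[Lemma~1]{MawPapZan03}; your sketch indeed follows that reference's separation-by-continuum strategy and is correct in outline.

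One technical point in your decomposition deserves correction, however. By building the positivity requirement into $\mathcal{S}$, the complement of $\mathcal{A}^+\cup\mathcal{S}$ in $[0,+\infty)\times\mathbb{R}$ contains, besides the desired set $\{T=\sigma\}$, also the \emph{open} set $\mathcal{Z}$ of data whose solution vanishes somewhere in $(\omega,\sigma]$ (after which it continues affinely and has $T=+\infty$). Hence your assertion that every point of the common boundary ``corresponds to a solution that \dots blows up exactly at $\sigma^-$'' is not justified, and a separating continuum produced by the Kuratowski--Whyburn argument could a priori pick up points of~$\mathcal{Z}$. The clean fix --- and what \cite{MawPapZan03} actually does --- is to drop the positivity constraint and take $\mathcal{S}=\{T>\sigma\}$: then the complement of $\mathcal{A}^+\cup\mathcal{S}$ is exactly $\{T=\sigma\}$, and positivity on $(\omega,\sigma)$ for those solutions comes for free, since hitting zero forces $T=+\infty$ via the affine continuation.

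With this correction your localization argument for~(i) also needs a supplement: continuous dependence at the origin only covers $z_2\in[0,\delta]$. For $z_1\in[0,\varepsilon)$ and $z_2<0$ one observes that the convex solution either hits zero (so $T=+\infty$) or has a positive minimum $m<z_1<\varepsilon$ with zero slope; an energy estimate against the autonomous majorant $w''=Cg(w)$, together with the local Lipschitz bound $G(\xi)\le \tfrac{L}{2}\xi^2$ near zero (whence $\int_{0^+}d\xi/\sqrt{G(\xi)}=+\infty$), then shows that the blow-up time from such a small minimum exceeds $\sigma-\omega$, so $T>\sigma$ in this case too.
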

We will also show that the part of the continuum $ \Gamma_{\mu} $ lying in the first quadrant is as close as desired to the origin if $ \mu $ is
large enough.
The next lemma provides the needed lower bound for $ \mu $.
\begin{lemma}\label{lem:scoppiano}
Assume that $ g $ satisfies $ ( g_{*} ) $ and $ ( g^{*}_{\infty} ) $ and that there are $ \underline{b} > 0 $ and $ \omega_{1}, \omega_{2} $ such that
$ \omega < \omega_{1} < \omega_{2} \le \sigma $ and $ b(t) \le - \underline{b} $ for $ t \in [ \omega_{1}, \omega_{2} ] $.
Then, for each $ r > 0 $ there exists $ \hat{\mu} = \hat{\mu}( \omega_{1} - \omega, \omega_{2} - \omega_{1}, \underline{b}, r ) > 0 $ such that any solution
$ v $ of \eqref{eq:pesoneg} with $ \left| ( v(\omega), v'(\omega) ) \right| > r $ and $ ( v(\omega), v'(\omega) ) \in Q_{1} $ blows up at
$ t^{*} \in ( \omega, \omega_{2} ) $ if $ \mu > \hat{\mu} $.
\end{lemma}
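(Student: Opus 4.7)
The plan is to exploit that $v'' = \mu b^{-}(t)\,g(v) \ge 0$ forces convexity in the region where $v > 0$, combined with a Keller--Osserman--type energy inequality on $[\omega_1,\omega_2]$ driven by the uniform negativity of $b$ there.

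First, I would analyze the behavior on $[\omega,\omega_1]$. Let $[\omega,t^*)$ be the maximal interval of existence of $v$ inside $[\omega,\omega_2]$. As long as $v \ge 0$, we have $v'' \ge 0$, so $v'$ is non-decreasing; since $(v(\omega),v'(\omega))\in Q_1$, it follows that $v' \ge 0$ and $v \ge v(\omega) \ge 0$ on $[\omega,t^*)$. If $t^*<\omega_1$ we are already done, so we may assume $t^* \ge \omega_1$ and use convexity to write
\[
v(\omega_1) \;\ge\; v(\omega) + v'(\omega)(\omega_1-\omega) \;\ge\; \min\{1,\omega_1-\omega\}\bigl(v(\omega)+v'(\omega)\bigr)\;>\;\rho_0,
\]
with $\rho_0 := r\min\{1,\omega_1-\omega\}>0$, where I used that for $(v(\omega),v'(\omega))\in Q_1$ with norm exceeding $r$ one has $v(\omega)+v'(\omega) > r$.

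Next, on $[\omega_1,t^*)$, I would multiply the equation by $v'\ge 0$ and integrate, using $b^{-}(s)\ge \underline{b}$ on $[\omega_1,\omega_2]$, to obtain the energy estimate
\[
\frac{v'(t)^2}{2}\;\ge\;\frac{v'(\omega_1)^2}{2}+\mu\,\underline{b}\,\bigl(G(v(t))-G(v(\omega_1))\bigr)\;\ge\;\mu\,\underline{b}\,\bigl(G(v(t))-G(v(\omega_1))\bigr).
\]
Taking square roots (legitimate since $v'\ge 0$) and separating variables yields
\[
\frac{v'(t)}{\sqrt{G(v(t))-G(v(\omega_1))}}\;\ge\;\sqrt{2\mu\,\underline{b}}.
\]

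Finally, I would introduce the time-map $T(u):=\int_u^{+\infty}d\xi/\sqrt{G(\xi)-G(u)}$. The Keller--Osserman part of $(g_\infty^*)$ ensures $T(u)<+\infty$ for every $u>0$, and the second part of $(g_\infty^*)$ gives $T(u)\to 0$ as $u\to +\infty$; a standard dominated-convergence argument yields continuity of $T$ on $(0,+\infty)$, and hence $T_0 := \sup_{u\ge \rho_0}T(u) < +\infty$. Arguing by contradiction, if $t^*\ge \omega_2$ we may integrate the previous inequality over $[\omega_1,\omega_2]$ and change variable to get
\[
\sqrt{2\mu\,\underline{b}}\,(\omega_2-\omega_1)\;\le\;\int_{v(\omega_1)}^{v(\omega_2)}\frac{du}{\sqrt{G(u)-G(v(\omega_1))}}\;\le\;T(v(\omega_1))\;\le\;T_0,
\]
which fails for $\mu > \hat\mu := T_0^{\,2}/\bigl(2\underline{b}(\omega_2-\omega_1)^2\bigr)$. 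Hence $t^*\in(\omega,\omega_2)$, and because $v$ is convex with $v'\ge 0$, $t^*$ is necessarily a blow-up time. The dependence of $\hat\mu$ on $\omega_1-\omega$, $\omega_2-\omega_1$, $\underline{b}$, $r$ enters through $\rho_0$ (and thus $T_0$) and through the explicit factor $\omega_2-\omega_1$. The only delicate point is the boundedness of $T$ on $[\rho_0,+\infty)$, but this is immediate from continuity plus the limit at infinity supplied by $(g_\infty^*)$.
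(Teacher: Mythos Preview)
Your proof is correct and follows essentially the same strategy as the paper's: a lower bound for $v(\omega_1)$ from the initial data, followed by the energy inequality $v'(t)\ge\sqrt{2\mu\underline{b}}\sqrt{G(v(t))-G(v(\omega_1))}$ on $[\omega_1,\omega_2]$ and the time-map estimate from $(g_\infty^*)$. The only cosmetic difference is in the first step: the paper obtains $v(\omega_1)\ge\delta r$ with $\delta=(\omega_1-\omega)/\sqrt{1+(\omega_1-\omega)^2}$ via a case distinction on whether $v(\omega)\ge\delta r$, whereas your convexity bound $v(\omega_1)\ge v(\omega)+v'(\omega)(\omega_1-\omega)\ge\min\{1,\omega_1-\omega\}\,(v(\omega)+v'(\omega))$ is a bit more direct; both yield a constant depending only on $r$ and $\omega_1-\omega$, which is all that is needed.
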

\begin{proof}
Let $ v $ be a solution of \eqref{eq:pesoneg} with $ \left| ( v(\omega), v'(\omega) ) \right| > r $ and $ ( v(\omega), v'(\omega) ) \in Q_{1} $
and let
\[
t^{*} = \sup \{ t \in ( \omega, \omega_{2}) : v \text{ is continuable on } [ \omega, t ] \}.
\]
The lemma is proved if we show that there exists $ \hat{\mu} $ such that $ t^{*} < \omega_{2} $ whenever $ \mu \ge \hat{\mu} $.
Now, if $ t^{*} \le \omega_{1} $ then there is nothing to prove, therefore, without loss of generality we can assume that
$ t^{*} > \omega_{1} $.
We remark that the sign conditions on $ b $ and $ g $ imply that $ v $ and $ v' $ are both non-negative and increasing on $ \left[ \omega, t^{*} \right) $.

We fix the positive number
\[
\delta = \dfrac{ \omega_{1} - \omega }{ \sqrt{ 1 + ( \omega_{1} - \omega )^{2} } } < 1
\]
and observe that if $ v(\omega) \ge \delta r $ then $ v( t ) \ge \delta r $ for all $ t \in ( \omega, t^{*} ) $ and, in particular, for
$ t = \omega_{1} $.
On the other hand, if $  v(\omega) < \delta r $, then
\[
v'(t) \ge v'(\omega) \ge \sqrt{ r^{2} - v(\omega)^{2} } \ge r \sqrt{ 1 - \delta^{2} }, \quad \mbox{ for every } t \in [ \omega, t^{*}),
\]
and we again obtain
\[
v\left( \omega_{1} \right) \ge r ( \omega_{1} - \omega ) \sqrt{ 1 - \delta^{2} } = \delta r
\]
as a consequence.

We set
\[
E(t) = \frac{1}{2} v'(t)^{2} - \mu \underline{b} G( v(t) )
\]
and compute
\[
E'(t) = \mu v'(t) g( v(t) ) ( b^{-}(t) - \underline{b} ) \ge 0, \quad \mbox{ for every } t \in [ \omega, t^{*} ),
\]
which implies that
\[
E(t) \ge E\left( \omega_{1} \right) \ge -\mu \underline{b} G\left( v\left( \omega_{1} \right) \right), \quad \mbox{ for every } t \in [\omega_{1}, t^{*}),
\]
and, hence,
\[
v'(t) \ge \sqrt{ 2 \mu \underline{b} } \sqrt{ G( v(t) ) - G\left( v\left( \omega_{1} \right) \right) }, \quad \mbox{ for every } t \in [\omega_{1}, t^{*}).
\]
Therefore we obtain the following estimate
\begin{align*}
t^{*} - \omega_{1} & \le \frac{1}{ \sqrt{ 2 \mu \underline{b} } } \int_{ v\left( \omega_{1} \right) }^{ v(t^{*}) }
\dfrac{ d \xi }{ \sqrt{ G(\xi) -  G\left( v\left( \omega_{1} \right) \right) } } \\
& \le \frac{1}{ \sqrt{ 2 \mu \underline{b} } } \sup_{ u \ge \delta r }
\int_{ u }^{ +\infty } \dfrac{ d \xi }{ \sqrt{ G(\xi) -  G\left( u \right) } }. 
\end{align*}
We have that $ t^{*} < \omega_2 $ if we choose
\[
\mu > \hat{\mu} := \frac{1}{ 2 \underline{b} ( \omega_{2} - \omega_{1} )^{2} }
\left[ \sup_{ u \ge \delta r } \int_{ u }^{ +\infty } \dfrac{ d \xi }{ \sqrt{ G(\xi) -  G\left( u \right) } } \right]^{2}
\]
which is finite by assumption $ ( g^{*}_{\infty} ) $.
\end{proof}
Since the solutions starting from $ \Gamma_{\mu} $ at $ t = \omega $ blow up exactly as $ t \to \sigma^{-},$
we immediately obtain the following statement.
\begin{proposition}\label{pro:continuo}
Assume that $ g $ and $ b $ are as in Lemmas~\ref{lem:mapaza} and \ref{lem:scoppiano}.
For each $ r > 0 $ let $ \hat{\mu} = \hat{\mu}( \omega_{1} - \omega, \omega_{2} - \omega_{1}, \underline{b}, r ) > 0 $ be the number given by
Lemma~\ref{lem:scoppiano}.
Then, if $ \mu > \hat{\mu} $, the continuum $ \Gamma_{\mu} $ given by Lemma~\ref{lem:mapaza} satisfies
$ \Gamma_{\mu} \cap Q_{1} \subset \overline{ B( 0, r ) } $.
\end{proposition}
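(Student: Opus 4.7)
The plan is to argue by contradiction, combining the two preceding lemmas in a straightforward way; indeed the remark immediately preceding the statement essentially gives this away (``we immediately obtain'').

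First I would fix $r > 0$, let $\hat{\mu}$ be the constant produced by Lemma~\ref{lem:scoppiano}, and suppose toward a contradiction that there exists some $\mu > \hat{\mu}$ and some point $z_0 = (v_0, v_0') \in \Gamma_\mu \cap Q_1$ with $|z_0| > r$. Denote by $v$ the unique solution of \eqref{eq:pesoneg} with $(v(\omega), v'(\omega)) = z_0$.

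Next I would invoke Lemma~\ref{lem:mapaza}: since $z_0 \in \Gamma_\mu$, the solution $v$ is defined on the whole open interval $(\omega, \sigma)$, remains positive there, and satisfies $v(t) \to +\infty$ only in the limit $t \to \sigma^-$. In particular, $v$ is finite (hence continuable) on every compact subinterval of $[\omega, \sigma)$.

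Then I would invoke Lemma~\ref{lem:scoppiano}: since $\mu > \hat{\mu}$, the initial datum $z_0 \in Q_1$, and $|z_0| > r$, the solution $v$ must blow up at some point $t^* \in (\omega, \omega_2)$. Because $\omega_2 \le \sigma$, this gives $t^* < \sigma$, contradicting the previous paragraph. Hence no such $z_0$ exists and $\Gamma_\mu \cap Q_1 \subset \overline{B(0,r)}$, as required.

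I do not anticipate any real obstacle here: the content of the proposition is entirely absorbed into Lemmas~\ref{lem:mapaza} and~\ref{lem:scoppiano}, and the combination amounts to observing that ``blow-up exactly at $\sigma$'' and ``blow-up strictly before $\omega_2 \le \sigma$'' cannot both hold for the same solution. The only point worth double-checking is that Lemma~\ref{lem:scoppiano} applies with strict inequality $|z_0| > r$ (it is stated that way), and that $\omega_2 \le \sigma$ suffices to get a strict comparison $t^* < \sigma$; both are built in.
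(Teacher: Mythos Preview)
Your argument is correct and is precisely the one the paper has in mind: the authors do not even write out a proof, noting only that solutions starting from $\Gamma_{\mu}$ blow up \emph{exactly} as $t \to \sigma^{-}$, which is incompatible with the earlier blow-up forced by Lemma~\ref{lem:scoppiano}. Your contradiction spells this out faithfully, including the relevant check that $\omega_{2} \le \sigma$ yields $t^{*} < \sigma$.
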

In a similar way we can obtain a backward version of the preceding results which we summarize hereafter without proof.
\begin{proposition}\label{pro:scoppianoasx}
Assume $ ( g_{*} )$ and $ ( g_{\infty}^{*} ) $ and suppose that
\[
\omega \in \overline{\{ t\in ( \omega, \sigma ) : b(t) < 0 \}}.
\]
and that there are $ \omega_{1}, \omega_{2} $ such that $ \omega \le \omega_{1} < \omega_{2} < \sigma $ and
$ b(t) \le - \underline{b} $ for $ t \in [ \omega_{1}, \omega_{2} ] $.
Then, there is an unbounded continuum $ \Gamma_{\mu} \subset \left[ 0, +\infty \right) \times \mathbb{R} $, with
$ \pi_1(\Gamma_{\mu}) = \left[ 0, +\infty \right)$, such that each solution of \eqref{eq:pesoneg} with
$ ( v(\sigma), v'(\sigma) ) \in \Gamma_{\mu} $ satisfies $ v(t) > 0 $ for all
$ t \in ( \omega, \sigma ) $ and $ v(t) \to +\infty $ as $ t \to \omega^{+} $.
The localization of $ \Gamma_{\mu} $ in the phase-plane can be described as follows:
there is $ \delta > 0 $ and
\begin{itemize}
\item[(i)]
there is $ \varepsilon > 0 $ such that
$ \pi_2( \Gamma_{\mu} \cap \left[ 0, \varepsilon \right) \times \mathbb{R} ) \subset ( -\infty, -\delta ) $;
\item[(ii)]
there is $ K > 0 $ such that $ \pi_2( \Gamma_{\mu} \cap ( K, +\infty ) \times \mathbb{R} ) \subset ( \delta, +\infty ) $.
\end{itemize}
Moreover, for each $ r > 0 $ there is $ \hat{\mu} = \hat{\mu}( \sigma - \omega_{2}, \omega_{2} - \omega_{1}, \underline{b}, r ) > 0 $ such that
if $ \mu > \hat{\mu} $ then $ \Gamma_{\mu} \cap Q_{4} \subset \overline{ B( 0, r ) } $.
\end{proposition}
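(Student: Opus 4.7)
The plan is to reduce Proposition \ref{pro:scoppianoasx} to the forward statements (Lemma \ref{lem:mapaza}, Lemma \ref{lem:scoppiano} and Proposition \ref{pro:continuo}) by the time-reversal $s = \omega + \sigma - t$. Setting $\tilde{v}(s) := v(\omega + \sigma - s)$ and $\tilde{b}(s) := b(\omega + \sigma - s)$, a direct computation gives $\tilde{v}''(s) = v''(\omega + \sigma - s)$, so that $\tilde{v}$ solves the structurally identical equation $\tilde{v}'' - \mu \tilde{b}^-(s) g(\tilde{v}) = 0$ on $[\omega,\sigma]$. The map sending the initial condition $(v(\sigma),v'(\sigma))$ of $v$ at the right endpoint to the initial condition $(\tilde{v}(\omega),\tilde{v}'(\omega))$ of $\tilde{v}$ at the left endpoint is the reflection $R(x,y) = (x,-y)$ across the $x$-axis, which swaps $Q_1$ and $Q_4$ and reverses the sign of the $y$-projection while leaving the first coordinate (hence $\pi_1$) unchanged.

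Next, I would verify that the hypotheses of Lemma \ref{lem:mapaza} and Lemma \ref{lem:scoppiano} transfer correctly to $\tilde{b}$. The condition $\omega \in \overline{\{b<0\}}$ becomes $\sigma \in \overline{\{\tilde{b}<0\}}$, as required to apply Lemma \ref{lem:mapaza} to the reversed problem; the interval $[\omega_1,\omega_2]$ on which $b \le -\underline{b}$ is mapped by time reversal to $[\tilde{\omega}_1,\tilde{\omega}_2] := [\omega + \sigma - \omega_2,\,\omega + \sigma - \omega_1]$, which lies strictly inside $(\omega,\sigma)$ and on which $\tilde{b} \le -\underline{b}$. Since $\tilde{\omega}_1 - \omega = \sigma - \omega_2$ and $\tilde{\omega}_2 - \tilde{\omega}_1 = \omega_2 - \omega_1$, the threshold produced by Proposition \ref{pro:continuo} becomes precisely $\hat{\mu}(\sigma - \omega_2,\,\omega_2 - \omega_1,\,\underline{b},\,r)$, matching the statement.

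Applying the forward results to $(\tilde{v},\tilde{b})$ yields an unbounded continuum $\tilde{\Gamma}_\mu \subset [0,+\infty)\times\mathbb{R}$ of initial data at $s=\omega$ producing solutions positive on $(\omega,\sigma)$ that blow up as $s \to \sigma^-$, with the localization items (i)--(ii) of Lemma \ref{lem:mapaza} and, for $\mu$ large, the smallness bound $\tilde{\Gamma}_\mu \cap Q_1 \subset \overline{B(0,r)}$. I would then set $\Gamma_\mu := R(\tilde{\Gamma}_\mu)$. Since $R$ is a homeomorphism of the plane with $\pi_1 \circ R = \pi_1$, $\Gamma_\mu$ remains an unbounded continuum with $\pi_1(\Gamma_\mu) = [0,+\infty)$; the solutions starting from $\Gamma_\mu$ at $t=\sigma$ are exactly the time-reverses of those starting from $\tilde{\Gamma}_\mu$ at $s=\omega$, hence are positive on $(\omega,\sigma)$ and blow up as $t \to \omega^+$. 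Under $R$ the sign of $\pi_2$ flips, which is precisely why items (i)--(ii) in Proposition \ref{pro:scoppianoasx} feature the opposite signs compared to Lemma \ref{lem:mapaza}; similarly $\tilde{\Gamma}_\mu \cap Q_1 \subset \overline{B(0,r)}$ becomes $\Gamma_\mu \cap Q_4 \subset \overline{B(0,r)}$. I do not foresee a genuine obstacle in this argument: it is essentially a transcription under an explicit involution. The only point demanding care is the bookkeeping of how $(\omega_1,\omega_2)$ get relabelled to $(\tilde{\omega}_1,\tilde{\omega}_2)$ when reading off the dependences of $\hat{\mu}$, which is immediate from the identities recorded above.
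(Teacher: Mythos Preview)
Your argument is correct and is precisely the intended one: the paper does not give an explicit proof of Proposition~\ref{pro:scoppianoasx} but presents it as the ``backward version'' of Lemma~\ref{lem:mapaza}, Lemma~\ref{lem:scoppiano} and Proposition~\ref{pro:continuo}, obtained ``in a similar way''; the time-reversal $s=\omega+\sigma-t$ together with the reflection $R(x,y)=(x,-y)$ is exactly how one makes this precise, and your bookkeeping of the transferred hypotheses and of the dependences of $\hat{\mu}$ is accurate.
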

\section{The main results}\label{sez-3}
In this section, we prove our main results dealing with positive radial solutions to the problem
\begin{equation}\label{eq-blow}
\left\{
\begin{array}{ll}
\vspace{0.1cm}
\Delta u + a_\mu(\vert x \vert) g(u) = 0, & \; x \in B,
\\
u(x) \to \infty, & \; x \to \partial B,
\end{array}
\right.
\end{equation}
where $B := \{ x \in \mathbb{R}^N \, : \, \vert x \vert < 1\}$ is the unit ball and the weight function $a_\mu$ is defined as
$$
a_\mu(r) := a^+(r) - \mu a^-(r), \qquad \mu > 0,
$$
with $a^+,a^-$ the positive/negative part of a continuous function $a: [0,1] \to \mathbb{R}$ satisfying the following condition:
\begin{itemize}
\item [$(a_*)$] 
\textit{$a(1) < 0$ and there exist points $ \tau_{i}, \sigma_{i} $ such that
\begin{equation*}
0 = \tau_{0} \leq \sigma_{1} < \tau_{1} < \ldots < \sigma_{i} < \tau_{i} < \ldots < \sigma_{m} < \tau_{m} < \sigma_{m+1} = 1
\end{equation*}
and
\begin{align*}
& a(r)\geq 0, \; \text{ on } [\sigma_{i},\tau_{i}], \qquad a(r)\not\equiv0 \; \text{ on } [\sigma_{i},\tau_{i}], \qquad \quad i=1,\ldots,m; \\
& a(r)\leq 0, \; \text{ on } [\tau_{i},\sigma_{i+1}], \quad a(r)\not\equiv0 \; \text{ on } [\tau_{i},\sigma_{i+1}], \qquad i=0,\ldots,m.
\end{align*}
}
\end{itemize}
Incidentally, when referring to radial solutions to \eqref{eq-blow} we always mean
\emph{classical} radial solutions, namely, solutions $u(x) = u(r)$ (with $r = \vert x \vert)$) of the singular ODE problem
\begin{equation}\label{ODE-1}
\left\{
\begin{array}{ll}
\vspace{0.1cm}
\bigl(r^{N-1} u'\bigr)' + r^{N-1} a_\mu(r) g(u) = 0, & \; 0 < r < 1,
\\
u'(0) = 0, \quad \lim_{r \to 1^-}u(r) = +\infty, & 
\end{array}
\right.
\end{equation}
\begin{remark}\label{oss-blo}
Notice that the condition $a \leq 0$ on a left neighborhood of $r =1$ is necessary for the existence of solutions 
to \eqref{ODE-1}. By elementary arguments, it is also easily seen that $\lim_{r \to 1^-} u'(r) = +\infty$.
\end{remark}

Our first main result treats the case when $a$ is non-positive near $r = 0$.

\begin{theorem}\label{th-main1}
Let $a: [0,1] \to \mathbb{R}$ be a continuous function satisfying $(a_*)$ with
$$
0 < \sigma_1.
$$
Let $g: \mathbb{R}^+ \to \mathbb{R}$ be a locally Lipschitz continuous function satisfying $(g_*)$, $(g_0)$,
$(g_\infty)$ and $(g_\infty^*)$. Then, there exists $\mu^* > 0$ such that for any $\mu > \mu^*$ problem \eqref{eq-blow} has at least $2^m$ distinct positive radial solutions.
\end{theorem}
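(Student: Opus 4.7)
The plan is to run a shooting argument on the singular radial ODE \eqref{ODE-1}, flowing forward from $r=0$ the set of admissible initial data $\{(d,0) : d \geq 0\}$ (the positive $x$-semiaxis) and seeking intersections, at some $r = 1-\varepsilon$, with a continuum $\Gamma_\mu$ of initial conditions whose solutions blow up as $r \to 1^-$. Since Propositions \ref{lemma-sap} and \ref{pro:continuo} are stated for regular second order ODEs, I would first introduce on any subinterval $[\rho_1, \rho_2] \subset \,]0,1]$ a Liouville-type change of variable $t = \int_{\rho_1}^r s^{-(N-1)}\,ds$, under which $(r^{N-1} u')' + r^{N-1} a_\mu(r) g(u) = 0$ transforms into $u_{tt} + \tilde{a}_\mu(t) g(u) = 0$ with $\tilde{a}_\mu(t) = r(t)^{2(N-1)} a_\mu(r(t))$, preserving the sign structure and nodes of $a_\mu$.

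On the preliminary interval $[0, \sigma_1]$, where $a\leq 0$, the inequality $(r^{N-1} u')' \geq 0$ yields $u(r) \geq d > 0$ and $u'(r) \geq 0$, so for $d$ growing from $0$ the flow map $d \mapsto \varphi_{0, \sigma_1}(d, 0)$ traces a continuous curve in $Q_1$ starting from the origin. Fixing $r \in \,]0, r_{\sigma,\tau}]$ and $R \geq R_{\sigma,\tau}$ compatible with all the subsequent applications of Proposition \ref{lemma-sap}, an intermediate value argument extracts a subarc lying in $\mathcal{R}(r,R) \cap Q_1$ whose endpoints sit on $\mathcal{R}_{\textnormal{left}}(r,R)$ (the origin, for $d = 0$) and $\mathcal{R}_{\textnormal{right}}(r,R)$ (the arc of radius $R$, for some $D^* > 0$). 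I would then apply Proposition \ref{lemma-sap} iteratively on the intervals $[\sigma_i, \sigma_{i+1}]$ for $i = 1, \ldots, m-1$, each of which contains exactly one sign change of $a$ at $\tau_i$, doubling the number of sub-paths at each step. Choosing $\varepsilon > 0$ small enough that $1 - \varepsilon > \tau_m$, a final application on $[\sigma_m, 1-\varepsilon]$ yields $2^m$ sub-paths at $r = 1-\varepsilon$, each contained in $\mathcal{R}(r,R)$ and joining $\mathcal{R}_{\textnormal{left}}$ to $\mathcal{R}_{\textnormal{right}}$.

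For the endgame, Proposition \ref{pro:continuo} applied on $[1-\varepsilon, 1]$ (where $a \leq 0$ and $a(1) < 0$) produces the unbounded continuum $\Gamma_\mu$ of initial data at $r = 1-\varepsilon$ whose associated solutions remain positive on $\,]1-\varepsilon, 1[$ and diverge as $r \to 1^-$; enlarging $\mu^*$ once more, the same proposition forces $\Gamma_\mu \cap Q_1 \subset \overline{B(0, r)}$. Combined with the localization properties (i)–(ii), this means that inside $\mathcal{R}(r,R)$ the continuum $\Gamma_\mu$ contains a subcontinuum issuing from $\mathcal{R}_{\textnormal{top}}(r,R)$ (on the positive $y$-axis, at height above $\delta$) and exiting through $\mathcal{R}_{\textnormal{bot}}(r,R)$ (specifically through the arc of radius $r$ in $Q_4$, on its way to the large-$x$ region where $\pi_2 < -\delta$). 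A standard planar topological intersection lemma for continua joining opposite pairs of sides of a topological rectangle then provides, for each of the $2^m$ sub-paths, at least one point where it meets $\Gamma_\mu$. Since the iterative construction of Proposition \ref{lemma-sap} distributes the $2^m$ sub-paths over pairwise disjoint sub-intervals of the initial $d$-parameter range, these $2^m$ intersection points correspond to $2^m$ distinct values of $d$, hence to $2^m$ distinct radial solutions of \eqref{eq-blow}.

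The main obstacle I foresee is the last intersection step: one has to verify that the portion of $\Gamma_\mu$ living inside $\mathcal{R}(r,R)$ really admits the topological profile of a continuum joining $\mathcal{R}_{\textnormal{top}}$ to $\mathcal{R}_{\textnormal{bot}}$, so that the rectangular intersection principle delivers the desired crossings, and to check that each crossing corresponds to a globally \emph{positive} solution of the original equation and not merely a sign-changing solution of the modified equation \eqref{eq-stre}. Beyond this, the argument is bookkeeping: keeping the parameters $r$, $R$, $\varepsilon$, and the lower bounds on $\mu$ coming from the $m$ applications of Proposition \ref{lemma-sap} and from Proposition \ref{pro:continuo} mutually compatible, so that a single final $\mu^*$ works for all the steps simultaneously.
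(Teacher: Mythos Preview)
Your proposal is correct and matches the paper's own proof in all essentials: the Liouville change of variables to a regular ODE, the iterative use of Proposition~\ref{lemma-sap} on the successive sign-change intervals (with your cut-off $1-\varepsilon$ playing exactly the role of the paper's $\omega_m'$), the localization of $\Gamma_\mu$ via Proposition~\ref{pro:continuo}, and the final intersection via the rectangular crossing lemma (the paper invokes \cite[Lemma~3]{MulWil74}). The one place where the paper is slightly more explicit is on $[0,\sigma_1]$: since $a\le 0$ there, $u_s$ may blow up before $\sigma_1$, so the paper introduces the maximal continuation threshold $s_\mu$ and uses $|\psi_{[0,\sigma_1]}(s,0)|\to+\infty$ as $s\to s_\mu^-$ to guarantee the initial path reaches radius $R$; your monotonicity observation $u(\sigma_1)\ge d$ already gives this on the domain of the flow map, but you should acknowledge that this domain may be bounded.
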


Our second result deals with the case when $a$ is positive near $r = 0$; here, however, we need a further assumption (see Remark \ref{rem-dir}).

\begin{theorem}\label{th-main2}
Let $a: [0,1] \to \mathbb{R}$ be a continuous function satisfying $(a_*)$ with
$$
0 = \sigma_1.
$$
Let $g: \mathbb{R}^+ \to \mathbb{R}$ be a locally Lipschitz continuous function satisfying $(g_*)$, $(g_0)$,
$(g_\infty)$ and $(g_\infty^*)$. Finally, assume that there exists a positive radial solution of the Dirichlet problem
\begin{equation}\label{hp-ast}
\left\{
\begin{array}{ll}
\vspace{0.1cm}
\Delta u + a(\vert x \vert) g(u) = 0, & \; \vert x \vert < \tau_1,
\\
u(x) = 0, & \; \vert x \vert = \tau_1.
\end{array}
\right.
\end{equation} 
Then, there exists $\mu^* > 0$ such that for any $\mu > \mu^*$ problem \eqref{eq-blow} has at least $2^m$ distinct positive radial solutions.
\end{theorem}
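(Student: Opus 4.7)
The plan is to translate problem \eqref{ODE-1} into a shooting problem in the phase-plane and, following the scheme outlined in the Introduction, to find at least $2^m$ intersections between the forward image of the positive $x$-semiaxis (i.e.\ the set of admissible initial data $(u(0),u'(0))=(d,0)$, $d\geq 0$) and the continuum $\Gamma_\mu$ of blow-up initial data furnished by Lemma~\ref{lem:mapaza} and Proposition~\ref{pro:continuo}. To fit into the framework of Section~\ref{sez-2}, I would either perform a change of variable (e.g.\ $t=\int_r^{1} s^{-(N-1)}\,ds$) turning \eqref{ODE-1} into a regular equation $v''+q(t)g(v)=0$, or work directly with the singular ODE and rely on Remark~\ref{sol-piccole-rad} in place of Lemma~\ref{sol-piccole} on the very first interval $[0,\tau_1]$.

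The essential new ingredient, with respect to Theorem~\ref{th-main1}, is the analysis of the first positive interval $[0,\tau_1]$, on which $a\geq 0$. Parametrizing the flow by $d\geq 0$, for small $d>0$ Remark~\ref{sol-piccole-rad} gives $(u_d(\tau_1),u_d'(\tau_1))\in\mathcal{C}\cap Q_4$, arbitrarily close to the origin. The Dirichlet hypothesis~\eqref{hp-ast} then provides a distinguished value $d^*:=u^*(0)>0$ for which $u_{d^*}(\tau_1)=0$ and $u_{d^*}'(\tau_1)<0$; by continuous dependence on $d$, together with the extension $g_0$, slightly larger values of $d$ send the image at $\tau_1$ into $Q_3$. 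Hence the curve $d\mapsto(u_d(\tau_1),u_d'(\tau_1))$, $d\in[0,d^*+\delta]$, is a continuous arc starting at the origin, lying in $\mathcal{C}\cap Q_4$ for small $d$, and crossing the negative $y$-axis (and, in particular, the half-line $\mathcal{L}$) near $d=d^*$ before entering $Q_3$.

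Next, prolonging this parametrized arc by the forward flow through the negative-weight interval $[\tau_1,\sigma_2]$ and mimicking Claims~1 and~2 in the proof of Proposition~\ref{lemma-sap}, I would show that for $\mu$ larger than a suitable threshold one can extract two disjoint parameter sub-intervals $D_1,D_2\subset[0,d^*+\delta]$ whose images under $\varphi^\mu_{0,\sigma_2}$ are paths in $\mathcal{R}(r,R)$ joining $\mathcal{R}_{\textnormal{left}}(r,R)$ to $\mathcal{R}_{\textnormal{right}}(r,R)$. Heuristically, $D_1$ collects the smallest values of $d$ (the ``trivial'' branch staying close to the origin), while $D_2$ clusters around $d^*$, where the portion of the curve near $\mathcal{L}$ is pushed across $\mathcal{R}(r,R)$ by the negative-weight flow exactly as in Claim~2. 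Applying Proposition~\ref{lemma-sap} iteratively on $[\sigma_2,\sigma_3],\dots,[\sigma_{m-1},\sigma_m]$ and, for a small $\varepsilon>0$ with $\tau_m<1-\varepsilon$, on $[\sigma_m,1-\varepsilon]$, doubles the count of sub-paths at each step, yielding $2^m$ pairwise disjoint parameter sub-intervals whose images at $r=1-\varepsilon$ are sub-paths in $\mathcal{R}(r,R)$ joining the opposite sides $\mathcal{R}_{\textnormal{left}}$ and $\mathcal{R}_{\textnormal{right}}$.

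Finally, Lemma~\ref{lem:mapaza} and Proposition~\ref{pro:continuo} applied on $[1-\varepsilon,1]$ yield a continuum $\Gamma_\mu$ of blow-up initial data at $r=1-\varepsilon$ which, for $\mu$ large, lies inside $\mathcal{R}(r,R)$ and connects $\mathcal{R}_{\textnormal{top}}(r,R)$ to $\mathcal{R}_{\textnormal{bot}}(r,R)$; a standard planar topological argument then forces each of the $2^m$ sub-paths to meet $\Gamma_\mu$, giving, thanks to the disjointness of the parameter sub-intervals, $2^m$ distinct positive radial blow-up solutions of \eqref{eq-blow}. The main technical obstacle is the modified stretching on the first interval $[0,\sigma_2]$: unlike in Theorem~\ref{th-main1}, the initial datum is not a path already spanning $\mathcal{R}(r,R)$ but the singular manifold $\{u'(0)=0\}$, and a direct use of Proposition~\ref{lemma-sap} is unavailable; the Dirichlet hypothesis is exactly what is needed to ensure that the forward image at $\tau_1$ reaches the half-line $\mathcal{L}$, so that the Claim~2 mechanism can then push it across the topological rectangle.
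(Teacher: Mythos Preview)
Your proposal is correct and follows essentially the same approach as the paper: split the dynamics at $\sigma_2$, use the Dirichlet hypothesis together with Remark~\ref{sol-piccole-rad} on $[0,\tau_1]$ to obtain a crossing of $\mathcal{L}$, mimic Claims~1 and~2 on $[\tau_1,\sigma_2]$ to produce two sub-paths in $\mathcal{R}(r,R)$, then apply Proposition~\ref{lemma-sap} $m-1$ further times and intersect with the blow-up continuum. One small imprecision: the crossing of $\mathcal{L}$ does not occur ``near $d=d^*$'' but somewhere in $(0,d^*)$, since for small $d$ the image lies in $\mathcal{C}\cap Q_4$ while at $d=d^*$ it is already on the negative $y$-axis (outside $\mathcal{C}$); also, the extension to $d>d^*$ is unnecessary, as the paper simply works on $[0,d^*]$.
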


\begin{remark}\label{rem-dir}
Some remarks on the assumption on the existence of a positive radial solution to \eqref{hp-ast} are in order. 
Indeed, as it is well known, solutions to the equation in \eqref{hp-ast} satisfy the Pohozaev-type identity 
\begin{equation}\label{po}
\int_{B_{\tau_1}} \Delta A(\vert x \vert) G(u) \,dx - \frac{N-2}{2} \int_{B_{\tau_1}} a(\vert x \vert) u g(u) \, dx = 
\frac{1}{2} \int_{\partial B_{\tau_1}} u_{\nu}^2 \vert x \vert \, dx,
\end{equation}
where $B_{\tau_1}$ is the ball of radius $\tau_1$ and $A'(r) = r a(r)$ for $r \in [0,1]$, showing that the existence of a positive solution to \eqref{hp-ast} cannot be guaranteed for every $a$ and $g$; however, several existence results, depending on the assumptions on $a$ and $g$, can be proved.
In particular, whenever $g$ satisfies the usual Ambrosetti-Rabinowitz condition
\begin{equation} \label{A-R}
G(u)\leq \alpha ug(u),\quad \mbox{ for every } u \gg 0, 
\end{equation}
for some $\alpha \in (0,1/2)$, together with
\[
g(u)=O(u^p), \qquad u\to +\infty,
\]
for $p$ subcritical in the sense of Sobolev embeddings, that is,
\begin{equation}\label{subcrit}
p < \frac{N+2}{N-2},
\end{equation}
then standard variational arguments (see \cite{AmbMal07}) yields the existence of a positive solution to \eqref{hp-ast}.
The sub-criticality assumption \eqref{subcrit} could be even relaxed into   
\begin{equation}\label{subcritl}
p < \frac{N+2+2l}{N-2},
\end{equation}
whenever $a(0)=0$ and $a$ is Holder-continuous of some order $l>0$ in a neighborhood of $r=0$ (see \cite{Ni82};
incidentally, we observe that, in view of \eqref{po}, the bound \eqref{subcritl} is sharp for $g(u) = u^p$ and $a(r) = r^l$).
On the other hand, some complementary results not requiring \eqref{A-R} are also available. For instance, a shooting approach 
on the lines of \cite{Dam04} (see also \cite{CasKur87,GarManZan97}) gives the existence of a positive radial solution to \eqref{hp-ast} whenever $a(0) > 0$ and $g$ satisfies
\begin{equation} \label{hp-crit}
\liminf_{u \to +\infty} \frac{G(\theta u)}{u \hat g(u)} > \dfrac{N-2}{2N},
\end{equation}
for some $\theta \in (0,1)$, where
$$
\hat g(u):= \sup_{0 \leq v \leq u} g(v).
$$
Notice that, in the model case $g(u) = u^p$ for $p > 1$ and $N \geq 3$, \eqref{hp-crit} and \eqref{subcrit} are actually equivalent.
With the same technique, the case $a(0) = 0$ can also be tretated by assuming $a$ is continuously differentiable
in some interval $]0,\varepsilon]$ and $a(r) \sim c r^l$ with $l > 0$; in this situation, \eqref{hp-crit} can be relaxed, giving rise, in the model case $g(u) = u^p$, to the sub-criticality assumption \eqref{subcritl}.
\end{remark}

The rest of the section is devoted to the proof of our main results.

\begin{proof}[Proof of Theorem \ref{th-main1}]
We adopt a shooting-type approach, namely, we aim at showing the existence of
$$
0 < s_1 < \ldots < s_i < \ldots < s_{2^m}
$$
such that the solution $u_s$ of the Cauchy problem
\begin{equation}\label{blowODE2}
\left\{
\begin{array}{ll}
\vspace{0.1cm}
\bigl(r^{N-1} u'\bigr)' + r^{N-1} a_\mu(r) g(u^+) = 0, 
\\
(u(0), u'(0)) = (s,0),  
\end{array}
\right.
\end{equation}
is defined on $[0,1)$ and satisfies $\lim_{t \to 1^-}u_s(t) = +\infty$.
Incidentally, notice that by standard maximum principle arguments solutions obtained in this way are strictly positive, thus giving rise to positive radial solutions to \eqref{eq-blow}.

As a first step, we are going to consider the equation on the interval $[0,\sigma_1]$. Let us set
$$
s_\mu := \sup \left\{ \bar{d} \, : \, u_s \textnormal{ is defined (at least) on $[0,\sigma_1]$ for any $s \in [0,\bar{d}]$} \right\}.
$$
Notice that $s_\mu$ is well defined and strictly positive by continuous dependence arguments ($u \equiv 0$ is a solution of the equation); moreover the map
$$
[0,s_\mu[ \, \times \{0\} \ni (s,0) \mapsto \psi_{[0,\sigma_1]}(s,0) := (u_s(\sigma_1),u_s'(\sigma_1))
$$ 
is injective, satisfies $\psi_{[0,\sigma_1]} (0,0) = 0$ and
\begin{equation}\label{p1}
\psi_{[0,\sigma_1]} \left( [0,s_\mu[ \, \times \{0\} \right) \subset Q_1, \qquad \lim_{s \to s_\mu^-} \vert \psi_{[0,\sigma_1]}(s,0) \vert = +\infty. 
\end{equation}
Indeed, an elementary argument shows that $u_s' > 0$ as long as $u_s > 0$, thus proving the first part of \eqref{p1}; the second one, instead, follows from standard compactness arguments. 

We want now to follow the evolution of the path $[0,s_\mu[ \, \ni s \mapsto \psi_{[0,\sigma_1]}(s,0)$ through the flow
map on the interval $[\sigma_1,1]$. It is convenient, however, to change coordinates by setting
$$
t = h(r) := \int_{\sigma_1}^r \xi^{1-N}\,d\xi, \qquad v(t) := u(h^{-1}(t));
$$  
In this way, the equation in \eqref{blowODE2} is transformed into
\begin{equation}\label{eq:nonsingolare}
v'' + b_\mu(t) g_0(v) = 0, \qquad t \in [0,h(1)],
\end{equation}
where
\[
b_\mu(t) = b^{+}(t) - \mu b^{-}(t) \quad \text{and} \quad b(t) =  (h^{-1}(t))^{2N-2}a(h^{-1}(t)).
\]
Let us observe that $v'(t) = u'(r(t)) (h^{-1})'(t)$; hence, we are naturally led to consider the Cauchy problem
\begin{equation}\label{Cau2}
\left\{
\begin{array}{ll}
\vspace{0.1cm}
v'' + b_\mu(t)g(v^+) = 0, 
\\
(v(0), v'(0)) = \gamma(s),  
\end{array}
\right.
\end{equation}
where
\begin{equation}\label{defgamma}
\gamma(s) := T(\psi_{[0,\sigma_1]}(s,0)), \qquad s \in [0,s_\mu[\,,  
\end{equation}
and $ T(x,y) := (x,(h^{-1})'( h(\sigma_{1})) y ) = ( x, \sigma_{1}^{N-1} y ) $ for any $(x,y) \in \mathbb{R}^2$.
To conclude the proof we thus have to show the existence of
$$
0 < s_1 < \ldots < s_i < \ldots < s_{2^m} < s_\mu
$$
such that the solution $v_s$ of \eqref{Cau2} satisfies $\lim_{t \to h(1)^-} v_s(t) = +\infty$.  

To prove this, we are going to take advantage of the results developed in Section \ref{sez-2}.
Precisely, we define
\[
\sigma_i' := h(\sigma_i), \quad i=1,\ldots,m+1, \qquad \tau_i' := h(\tau_i), \quad i=1,\ldots,m, 
\]
and we fix $ \omega_{m}' $ such that
\[
\tau_m' < \omega_m' < \sigma'_{m+1} \quad \text{and}\quad b \not\equiv 0 \text{ on } [ \tau_{m}', \omega_{m}' ] \text{ and on }
[\omega_{m}', \sigma_{m+1}'].
\]
It is easily seen that the assumption $(b_*)$ of Section \ref{sez-2.2} is satisfied on each interval
$[\sigma_i',\sigma_{i+1}']$ (with the choices $ \sigma = \sigma_{i}'$, $ \omega = \sigma_{i+1}'$, $ \tau = \tau_{i}' $) for $i=1,\ldots,m-1$
as well as on the interval $[\sigma_m',\omega_m']$ (with the choices $ \sigma = \sigma_{m}'$, $ \omega = \omega_{m}'$, $ \tau = \tau_{m}' $).
Moreover, there exist $ \omega_{m}'',$ $ \omega_{m}'''$ and $ \underline{b} > 0 $ such that
\[
\omega_{m}' < \omega_{m}'' < \omega_{m}''' < \sigma_{m+1}' \quad \text{and} \quad b(t) \ge -\underline{b} \text{ on } [ \omega_{m}'', \omega_{m}''' ].
\]
Accordingly, we can consider the values
$$
0 < r_{\sigma_i',\tau_i'} < R_{\sigma'_i,\tau_i'}, \qquad i=1,\ldots,m,
$$
given by Proposition \ref{lemma-sap} and we can define the topological rectangle $\mathcal{R}(r,R)$ as in \eqref{def-ret}, where
$$
0 < r < \min_i r_{\sigma_i',\tau_i'} < \max_i R_{\sigma'_i,\tau_i'} < R.
$$
Moreover, we fix
\begin{align*}
\mu  > \max\left\{\vphantom{\max_{1\leq i\leq m-1}} \right. & \max_{1\leq i\leq m-1} \mu^*([\sigma'_i,\sigma_{i+1}'],r,R), \;\mu^*([\sigma'_m,\omega_m'],r,R), \\
& \left. \hat\mu( \omega_{m}'' - \omega_{m}', \omega_{m}''' - \omega_{m}'', \underline{b}, r ) \vphantom{\max_{1\leq i\leq m-1}} \right\},
\end{align*}
where all the values $\mu^*$ in the above expression are given by Proposition \ref{lemma-sap}, while the value $ \hat\mu $ is given
by Proposition \ref{pro:continuo}.

In view of \eqref{p1} and recalling the definition \eqref{defgamma}, we can find $0 < s^* < s_\mu$ such that the path
$\gamma$ satisfies $\gamma([0,s^*]) \subset \mathcal{R}(r,R)$ as well as \eqref{hp-sap}.
Therefore, applying $m$ times Proposition \ref{lemma-sap}
we obtain the existence of $2^m$ pairwise disjoint intervals $[s_k^-,s_k^+]$, for $k=1,\ldots,2^m$, such that
$$
[s_k^-,s_k^+] \ni s \mapsto \varphi_{[\sigma_1',\omega_m']} (\gamma(s)) = \varphi_{[\tau_m',\omega_m']} \circ \varphi_{[\sigma_m',\tau_m']}
\circ \cdots \circ \varphi_{[\sigma_1',\tau_1']}(\gamma(s))  
$$
is a path contained in $\mathcal{R}(r,R)$ and joining the opposite sides
$\mathcal{R}_{\textnormal{left}}(r,R)$ and $\mathcal{R}_{\textnormal{right}}(r,R)$ (we are using here 
the natural notation for the flow map introduced in Section \ref{sez-2.2}).

On the other hand, thanks to Lemma \ref{lem:mapaza}, our choice of $ \mu $ and Proposition \ref{pro:continuo}, there exists an unbounded continuum
$ \Gamma_{\mu} \subset \left[ 0, +\infty \right) \times \mathbb{R} $ such that all solutions of \eqref{eq:nonsingolare} such that
$ ( v( \omega_{m}' ), v'( \omega_{m}' ) ) \in \Gamma_{\mu} $ are positive in $ ( \omega_{m}', h(1) ) $ and satisfy $ v( t ) \to +\infty $
as $ t \to \sigma'^{-}_{m+1} $.
Moreover, the localization properties given by Lemma \ref{lem:mapaza} and Proposition \ref{pro:continuo} ensure that the intersection
$ \Gamma_{\mu} \cap \mathcal{R}(r,R) $ has a connected component $ \Gamma_{\mu}' \subset \Gamma_{\mu} $ such that
\[
\{ 0 \} \times \left( 0, r \right] \supset \Gamma_{\mu}' \cap \{ 0 \} \times \mathbb{R} \neq 
 \emptyset \neq \Gamma_{\mu}' \cap Q_{4} \cap \partial B(0,r).
\]
In other words, there is a subcontinuum $ \Gamma_{\mu}' $ of $ \Gamma_{\mu} $ which lies inside the topological rectangle $ \mathcal{R}(r,R) $ and
joins the two ``horizontal'' sides $ \mathcal{R}_{\text{top}}(r,R) $ and $ \mathcal{R}_{\text{bot}}(r,R) $ of $ \partial \mathcal{R}(r,R) $.
Therefore, we apply \cite[Lemma 3]{MulWil74} and find that each path $ \varphi_{[\sigma_1',\omega_m']} (\gamma([s_k^-,s_k^+])) $ intersects
$ \Gamma_{\mu}' $ in at least one point.
More precisely, this means that for each $ k = 1, \dots, 2^{m} $, there exists $ s_{k} \in ( s_{k}^{-}, s_{k}^{+} ) $ such that
$ \varphi_{[\sigma_1',\omega_m']} (\gamma(s_{k})) \in \Gamma_{\mu} $ and, thus, the solution of \eqref{blowODE2} with $ s = s_{k} $ gives rise to
a solution of \eqref{eq-blow}.
\end{proof}

\begin{proof}[Proof of Theorem \ref{th-main2}]
As in the proof of Theorem \ref{th-main1}, we want to show the existence of
$$
0 < s_1 < \ldots < s_i < \ldots < s_{2^m}
$$
such that the solution $u_s$ of the Cauchy problem \eqref{blowODE2}
is defined on $[0,1)$ and satisfies $\lim_{t \to 1^-}u_s(t) = +\infty$.
Here, however, it is convenient to split the study of the dynamics on the intervals
$[0,\sigma_2]$ and $[\sigma_2,1]$.

As for the dynamics on $[\sigma_2,1]$, we argue exactly as in the proof of Theorem \ref{th-main1}. That is, 
we change variables by setting
$$
t = h(r) := \int_{\sigma_2}^r \xi^{1-N}\,d\xi, \qquad v(t) := u(h^{-1}(t)),
$$  
we define
$$
\sigma_i' := h(\sigma_i), \quad i=2,\ldots,m+1, \qquad \tau_i' := h(\tau_i), \quad i=2,\ldots,m, 
$$
and we fix 
$$
\tau_m' < \omega_m' < \sigma'_{m+1}.
$$
Then, we consider the values
$$
0 < r_{\sigma_i',\tau_i'} < R_{\sigma'_i,\tau_i'}, \qquad i=2,\ldots,m,
$$
given by Proposition \ref{lemma-sap} and we define the topological rectangle $\mathcal{R}(r,R)$ for
$$
0 < r < \min_i r_{\sigma_i',\tau_i'} < \max_i R_{\sigma'_i,\tau_i'} < R.
$$
Finally, we take
$$
\mu > \max\left\{\max_{2\leq i\leq m-1} \mu^*([\sigma'_i,\sigma_{i+1}'],r,R), \mu^*([\sigma'_m,\omega_m'],r,R)\right\},
$$
where all the values $\mu^*$ in the above expression are given again by Proposition \ref{lemma-sap}.

Now, we consider the dynamics on the interval $[0,\sigma_2]$. Defining (on its natural domain) the flow map
$$
\psi_{[0,\sigma_2]}(s,0) = (u_s(\sigma_2),u_s'(\sigma_2)),
$$
our aim is to show that the existence
of 
$$
0 < S^-_1 < S^+_1 < S^-_2 < S^+_2
$$
such that, for $k=1,2$ and $\mu$ large enough, the path  
$$
[S_k^-,S_k^+] \ni s \mapsto \eta_k(s) := T(\psi_{[0,\sigma_2]} (s,0)),    
$$
(here $T(x,y) := (x,(h^{-1})'(\sigma_2)y)$) 
is contained in the rectangle $\mathcal{R}(r,R)$ and joins the opposite sides
$\mathcal{R}_{\textnormal{left}}(r,R)$ and $\mathcal{R}_{\textnormal{right}}(r,R)$.
If this is the case, we can apply $m-1$ times Proposition \ref{lemma-sap} to each 
$\eta_i$ and then conclude the proof as in the one of Theorem \ref{th-main1}, by showing that each of the resulting $2^m$ sub-paths
actually intersects the blow-up continuum.

We thus conclude the proof by constructing the above paths $\eta_i$. Let us define $S^*$ to be the value
$u(0)$ for the positive radial solution of \eqref{hp-ast} and set
$\gamma(s) := (s,0)$ for $s \in [0,S^*]$.
We first consider the dynamics on $[0,\tau_1]$; 
incidentally, we notice that the solution
$u_s$ is defined on the whole $[0,\tau_1]$ for any $s \in [0,S^*]$.
Let $H_0 := \gamma(0) = ( 0, 0 ) $ and $H_1 := \gamma(S^*)$;
then, $K_0 := \psi_{[0,\tau_1]}(H_0) = (0,0)$ and, by assumption \eqref{hp-ast}, $K_1 := \psi_{[0,\tau_1]}(H_1) = (0,Y)$ with $Y < 0$.
Moreover, according to Remark \ref{sol-piccole-rad} we can find $s_3 \in \,]0,S^*[$ such that $K_3 := \psi_{[0,\tau_1]}(\gamma(s_3)) \in \mathcal{L}$.
Now, we pass to the dynamics on $[\tau_1,\sigma_2]$. Here, we argue exactly as in the final part of the proof of Proposition \ref{lemma-sap};
notice that Claim 1 and Claim 2 used therein still hold true, since, using the usual change of variable, we can transform the radial equation 
$(r^{N-1}u')' - \mu r^{N-1} a^-(r)g(u) = 0$ into $v'' - \mu b^-(t) g(v) = 0$ 
(for some $b^-$). The conclusion is then obtained for $\mu$ large enough.
\end{proof}

\begin{remark}
It is worth noticing that a lower bound on $\mu$ can be given whenever $g'(u) > 0$ for any $u > 0$. Indeed, in this case
the existence of positive radial solutions to \eqref{eq-blow} is possible only if
\begin{equation}\label{musharp}
\mu > \mu^{\#} : = \displaystyle{\dfrac{\int_B a^+(\vert x \vert)\,dx}{\int_B a^-(\vert x \vert)\,dx} = 
\frac{\int_0^1 r^{N-1} a^+(r)\,dr}{\int_0^1 r^{N-1}a^-(r)\,dr}}.
\end{equation}
To see this, we write the equation in \eqref{ODE-1} as
$$
r^{N-1}a_\mu(r) = -\frac{\bigl(r^{N-1} u'(r)\bigr)'}{g(u(r))}
$$
and we integrate on $[0,1-\eta]$, with $\eta > 0$ small, so as to obtain
$$
\int_0^{1-\eta} r^{N-1}a_\mu(r)\,dr = - \frac{(1-\eta)^{N-1} u'(1-\eta)}{g(u(1-\eta))} 
-\int_0^{1-\eta} \frac{r^{N-1}u'(r)^2}{g(u(r))^2}g'(u(r))\,dr.
$$ 
Recalling Remark \ref{oss-blo} (implying $u'(1-\eta) > 0$) and passing to the limit $\eta \to 0^+$, we finally find
$$
\int_0^{1} r^{N-1}a_\mu(r)\,dr < 0,
$$
thus yielding \eqref{musharp}. Conditions of this type were first introduced for the Neumann problem by Bandle, Pozio and Tesei \cite{BanPozTes88}; as for 
boundary blow-up solutions to the genuine PDE problem, see also \cite[Thorem 1]{Gar11}.
\end{remark}

\section{Related results}\label{sez-4}

In this final section, we propose some further results which can be easily obtained
using shooting-type arguments  
on the lines of the ones developed throughout the paper. 
\smallbreak
At first, we deal with a one-dimensional blow-up problem, by looking for (positive) solutions blowing-up at the extreme points of a compact interval. 

\begin{theorem}
Let $a: [0,1] \to \mathbb{R}$ be a continuous function such that $a(0) < 0$, $a(1) < 0$ and there exist 
$$
0 = \tau_{0} < \sigma_{1} < \tau_{1} < \ldots < \sigma_{i} < \tau_{i} < \ldots < \sigma_{m} < \tau_{m} < \sigma_{m+1} = 1
$$
such that
\begin{align*}
& a(r)\geq 0, \; \text{ on } [\sigma_{i},\tau_{i}], \qquad a(r)\not\equiv0 \; \text{ on } [\sigma_{i},\tau_{i}], \qquad \quad i=1,\ldots,m; \\
& a(r)\leq 0, \; \text{ on } [\tau_{i},\sigma_{i+1}], \quad a(r)\not\equiv0 \; \text{ on } [\tau_{i},\sigma_{i+1}], \qquad i=0,\ldots,m.
\end{align*}
Let $g: \mathbb{R}^+ \to \mathbb{R}$ be a locally Lipschitz continuous function satisfying $(g_*)$, $(g_0)$,
$(g_\infty)$ and $(g_\infty^*)$. Then, there exists $\mu^* > 0$ such that for any $\mu > \mu^*$ the blow-up problem
\begin{equation}\label{blow-fin}
\left\{
\begin{array}{ll}
\vspace{0.1cm}
u'' + a_\mu(t)g(u) = 0, 
\\
\lim_{t \to 0^+} u(t) = \lim_{t \to 1^-}u(t) = +\infty,  
\end{array}
\right.
\end{equation}
has at least $2^m$ distinct positive solutions.
\end{theorem}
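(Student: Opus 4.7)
The strategy is a two-sided version of the proof of Theorem \ref{th-main1}, made simpler by the absence of a singular $ r^{N-1} $ factor so that no preliminary change of variable is needed. In the radial proof the ``initial'' path was provided by the one-parameter family of Cauchy data $ (s,0) $ at $ r = 0 $; here the same role must be played, at some interior time, by a blow-up continuum coming from the interval $ [0, \sigma_1] $, and the intersection with a symmetric continuum coming from $ [\tau_m, 1] $ produces the desired solutions.

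First, since $ a \le 0 $ and $ a \not\equiv 0 $ on $ [0, \sigma_1] $, Proposition \ref{pro:scoppianoasx} (applied with $ \omega = 0 $, $ \sigma = \sigma_1 $, and any $ [\omega_1, \omega_2] \subset (0, \sigma_1) $ on which $ a \le -\underline{b} < 0 $) yields an unbounded continuum $ \Gamma_\mu^L \subset [0, +\infty) \times \mathbb{R} $ of Cauchy data at $ t = \sigma_1 $ whose solutions blow up as $ t \to 0^+ $, together with the localization $ \Gamma_\mu^L \cap Q_4 \subset \overline{B(0,r)} $ for $ \mu $ large. Symmetrically, choosing $ \omega_m' \in (\tau_m, 1) $ so that $ a \not\equiv 0 $ on each of $ [\tau_m, \omega_m'] $ and $ [\omega_m', 1] $, Lemma \ref{lem:mapaza} together with Proposition \ref{pro:continuo} produces a continuum $ \Gamma_\mu^R $ of Cauchy data at $ t = \omega_m' $ whose solutions blow up as $ t \to 1^- $, with $ \Gamma_\mu^R \cap Q_1 \subset \overline{B(0, r)} $ for $ \mu $ large.

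Next, exactly as in the proof of Theorem \ref{th-main1}, I fix $ 0 < r < \min_{1 \le i \le m} r_{\sigma_i, \tau_i} $ and $ R > \max_{1 \le i \le m} R_{\sigma_i, \tau_i} $, where the thresholds come from Proposition \ref{lemma-sap} applied on each of the intervals $ [\sigma_1, \sigma_2], \dots, [\sigma_{m-1}, \sigma_m], [\sigma_m, \omega_m'] $, every one of which contains a single sign change of $ a $ and thus satisfies $ (b_*) $. Then I pick $ \mu $ larger than the associated $ \mu^* $'s from Proposition \ref{lemma-sap} and than the $ \hat{\mu} $'s required for the two localizations above. The localization of $ \Gamma_\mu^L $ guarantees a subcontinuum of $ \Gamma_\mu^L $ lying inside $ \mathcal{R}(r,R) $ and meeting both ``vertical'' sides $ \mathcal{R}_{\textnormal{left}}(r,R) $ (since on $ x = 0 $ the continuum has $ y < -\delta $ and sits in $ \overline{B(0,r)} $) and $ \mathcal{R}_{\textnormal{right}}(r,R) $ (since the full continuum reaches $ \partial B(0, R) \cap Q_1 $). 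Using it in place of the path $ \gamma $ of Proposition \ref{lemma-sap} and iterating the stretching lemma $ m $ times along $ [\sigma_1, \sigma_2], \dots, [\sigma_m, \omega_m'] $ yields $ 2^m $ pairwise disjoint sub-continua at time $ \omega_m' $, each contained in $ \mathcal{R}(r, R) $ and joining $ \mathcal{R}_{\textnormal{left}}(r, R) $ to $ \mathcal{R}_{\textnormal{right}}(r, R) $. By the symmetric localization, $ \Gamma_\mu^R $ contains a subcontinuum inside $ \mathcal{R}(r,R) $ joining the ``horizontal'' sides $ \mathcal{R}_{\textnormal{top}}(r,R) $ and $ \mathcal{R}_{\textnormal{bot}}(r,R) $, so \cite[Lemma 3]{MulWil74} provides at least one intersection point for each of the $ 2^m $ sub-continua, giving Cauchy data at $ \omega_m' $ whose solutions blow up at both $ 0^+ $ and $ 1^- $.

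The main technical point is that Proposition \ref{lemma-sap} is formulated for continuous paths, whereas the natural starting object $ \Gamma_\mu^L $ is only a continuum. This is a mild difficulty: the arguments in the proof of Proposition \ref{lemma-sap} rely only on the sign of the vector field and on exit times from $ \mathcal{R}(r,R) $, and they extend to any connected compact set that crosses the rectangle between its two vertical sides, producing sub-continua with the same crossing property after the flow. Alternatively, one can extract an honest arc from the relevant subcontinuum of $ \Gamma_\mu^L $ via classical results on irreducible subcontinua and feed it directly to Proposition \ref{lemma-sap} as stated. Apart from this, the only other point to keep track of is the simultaneous compatibility of the constants $ r, R, \delta $ and $ \mu $ across the several propositions invoked, which is handled by fixing $ r $ and $ R $ first and then taking $ \mu $ large enough at the end.
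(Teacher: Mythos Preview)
Your plan is essentially the paper's own: two blow-up continua from Proposition~\ref{pro:scoppianoasx} and Lemma~\ref{lem:mapaza}/Proposition~\ref{pro:continuo}, the same choice of $\mathcal{R}(r,R)$ and of $\omega_m'$, the $m$-fold iteration of Proposition~\ref{lemma-sap}, and the final intersection via \cite[Lemma~3]{MulWil74}. The one place you diverge is the handling of the ``continuum versus path'' issue: the paper does not try to extend Proposition~\ref{lemma-sap} to continua nor to extract an arc, but instead approximates the relevant portion of $\Gamma_\mu^0$ by a genuine path $\gamma^\epsilon$ lying in an $\epsilon$-neighborhood (citing \cite[Section~4, Claim~1]{DamPap->}), applies Proposition~\ref{lemma-sap} verbatim, and then passes to the limit $\epsilon\to 0$ along a subsequence to land back on $\Gamma_\mu^0$. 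Your second alternative---pulling an ``honest arc'' out of an irreducible subcontinuum---is not safe as stated, since irreducible continua need not be arcs; the approximation-and-limit route avoids this subtlety and also makes the distinctness of the $2^m$ solutions transparent.
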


The proof is based again on the results of Section \ref{sez-2}. However, despite the fact that the equation now is simpler, some care is needed
due to the fact that two different continua of blowing-up solutions (at $t = 0^+$ and at $t = 1^-$ respectively) are involved in the argument. 
Let us also mention that multiple \emph{oscillating} blow-up solutions to \eqref{blow-fin} were already found (for any $\mu > 0$) in \cite{MawPapZan03}.

\begin{proof}[Sketch of the proof]
We fix $ \alpha_{0}', \alpha_{0}'', \omega_m, \omega_{m}', \omega_{m}'', $
and $ \underline{a} > 0 $ such that $ 0 \le \alpha_{0}' < \alpha_{0}'' < \sigma_{1} $,
$ \tau_{m} < \omega_{m} < \omega_{m}' < \omega_{m}'' \le 1 $, and
$ a(t) \le -\underline{a} $ on $ [ \alpha_{1}', \alpha_{1}'' ] \cup [ \omega_{m}', \omega_{m}'' ] $.
and we consider the continua $\Gamma_{\mu}^{0}$ and $\Gamma_{\mu}^{1} $ made up by initial conditions
(at the times $ \sigma_{1} $ and $ \omega_{m} $, respectively) of solutions blowing-up as
$ t \to 0^+ $ and $ t \to 1^-$ respectively.
More precisely, we apply Proposition \ref{pro:scoppianoasx} on $ [ 0, \sigma_{1} ] $ to get $ \Gamma_{\mu}^{0} $ and  Lemma \ref{lem:mapaza}
and Proposition \ref{pro:continuo} on $ [ \omega_{m}, 1 ] $ to get $ \Gamma_{\mu}^{1} $.
It can be shown that $ \Gamma_{\mu}^{0} $ crosses any topological rectangle $ \mathcal{R}(r,R) $ joining its ``vertical'' sides $ \mathcal{R}_{\text{left}} $
and $ \mathcal{R}_{\text{right}}$, provided that $ \mu $ is sufficiently large.
We thus have to show that there exist $2^m$ positive solutions to the geometrical Sturm-Liouville problem
\begin{equation}\label{SL}
\begin{cases}
u'' + a_\mu(t)g(u) = 0, & t \in [ \sigma_{1}, \omega_{m} ] \\
(u(\sigma_1),u'(\sigma_1)) \in \Gamma_{\mu}^{0} & \\
(u(\omega_m),u'(\omega_m)) \in \Gamma_{\mu}^{1} &
\end{cases}
\end{equation}
for each $ \mu $ large enough.
This can be proved by using in an iterative way Lemma \ref{lemma-sap} similarly as in the proof of Theorem \ref{th-main1} with a suitable topological
rectangle $ \mathcal{R}(r,E) $: 
the only difference here is that we have to follow the evolution of a general continuum $ \Gamma_{\mu}^{0} $ instead of the evolution
of the image of a continuous curve under the flow generated by the differential equation in \eqref{SL} in the phase plane.
One way to overcome this difficulty is to approximate the portion of the continuum $ \Gamma_{\mu}^{0} \cap \mathcal{R}(r,R) $ by a path
$ \gamma^{\epsilon} : [ 0, 1 ] \to \mathcal{R}(r,R) $ ($ \epsilon > 0 $ arbitrary and small) such that its image lies in an $ \epsilon $-neighborhood of
$ \Gamma_{\mu}^{0} $ and satisfies $ \gamma^{\epsilon}(0) \in \mathcal{R}_{\text{left}} $ and $ \gamma^{\epsilon}(1) \in \mathcal{R}_{\text{right}}$.
The details of this standard approximation procedure can be found for instance in \cite[Section 4, Claim 1]{DamPap->}.
The arguments employed in the proof of Theorem \ref{th-main1} show that, if $ \mu $ is large enough,
there are points $ s_{k}^{\epsilon} $, $ k = 1, \dots, 2^{m} $, with:
\[
0 < s_{1} < s_{2} < \dots < s_{2^{m}} < 1,
\]
such that the solution $ u_{k}^{\epsilon} $ of
\[
\begin{cases}
u'' + a_\mu(t)g(u) = 0, & t \in [ \sigma_{1}, \omega_{m} ] \\
(u(\sigma_1),u'(\sigma_1)) = \gamma^{\epsilon}(s_{k})
\end{cases}
\]
satisfies also $ ( u^{\epsilon}(\omega_m), (u^{\epsilon})'(\omega_m) ) \in \Gamma_{\mu}^{1} $.
Letting $ \epsilon \to 0 $ along a suitable sequence, $ \gamma^{\epsilon}(s_{k} $ converges to $ z_{k} \in \Gamma_{\mu}^{0} \cap \mathcal{R}(r,R) $ and, thus,
$ u_{k}^{\epsilon} $ converges (uniformly on $ [ \sigma_{1}, \omega_{m} ] $) to a solution $ u_{k} $ of \eqref{SL}.
\end{proof}

We now conclude the paper by showing how the stretching-type Proposition \ref{lemma-sap}
can be applied, within a shooting approach, also in different contexts.
More precisely, instead of considering blow-up solutions,  
in our final result we look for positive radial homoclinic
solutions to
\begin{equation}\label{eq-hom}
\Delta u + a_\mu(\vert x \vert) g(u) = 0, \qquad x \in \mathbb{R}^N,
\end{equation}
when $a$ is negative at infinity (see \cite{FranSfecNARWA17, FranSfecARXIV16} for some very recent and related results).
For simplicity, we restrict ourselves to the case of a weight function which is non-positive near $r=0$
(as in Theorem \ref{th-main1}) but clearly a similar conclusion could be obtained also when $a$ is non-negative at the expenses of 
additional sub-criticality assumptions (as in Theorem \ref{th-main2}; see also Remark \ref{rem-dir}). Incidentally, we recall that by a radial homoclinic solution to \eqref{eq-hom}
we mean a solution $u(x) = u(\vert x \vert)$ with $(u(r),u'(r)) \to (0,0)$ for $r \to +\infty$.

\begin{theorem}
Let $a: [0,+\infty[\, \to \mathbb{R}$ be a continuous function such that there exist 
$$
0 = \tau_{0} < \sigma_{1} < \tau_{1} < \ldots < \sigma_{i} < \tau_{i} < \ldots < \sigma_{m} < \tau_{m}
$$
such that
\begin{align*}
& a(r)\geq 0, \; \text{ on } [\sigma_{i},\tau_{i}], \qquad a(r)\not\equiv0 \; \text{ on } [\sigma_{i},\tau_{i}], \qquad \quad i=1,\ldots,m; \\
& a(r)\leq 0, \; \text{ on } [\tau_{i},\sigma_{i+1}], \quad a(r)\not\equiv0 \; \text{ on } [\tau_{i},\sigma_{i+1}], \qquad i=0,\ldots,m - 1; \\
& a(r) < 0, \; \text{ on } \,]\tau_m,+\infty[ \quad \mbox{ with } \lim_{r \to +\infty} \int_{\tau_m}^r a(s) s^{N-1}\,ds = - \infty. 
\end{align*}
Let $g: \mathbb{R}^+ \to \mathbb{R}$ be a locally Lipschitz continuous function satisfying $(g_*)$, $(g_0)$,
$(g_\infty)$ and $(g_\infty^*)$. Then, there exists $\mu^* > 0$ such that for any $\mu > \mu^*$ equation
\eqref{eq-hom} has at least $2^m-1$ distinct positive radial homoclinic solutions.
\end{theorem}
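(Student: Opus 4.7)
My plan is to adapt the shooting scheme used in Theorem~\ref{th-main1}, replacing the blow-up continuum with a continuum of initial conditions whose forward orbits decay to $(0,0)$ at infinity, constructed via the Conley-Wa\.zewski method. First I would desingularize the radial ODE $(r^{N-1}u')' + r^{N-1} a_\mu(r)g(u^+) = 0$, $u'(0) = 0$, by the change of variable $t = h(r) = \int_{\sigma_1}^r \xi^{1-N}\,d\xi$ used in the proof of Theorem~\ref{th-main1}, obtaining $v'' + b_\mu(t)g_0(v) = 0$ on $[0, t_\infty)$ with $t_\infty = h(+\infty) \in (0,+\infty]$; the homoclinic condition translates into $(v(t),v'(t)) \to (0,0)$ as $t \to t_\infty^-$. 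The shooting path $\gamma(s) = T(\psi_{[0,\sigma_1]}(s,0))$, $s \in [0, s_\mu)$, is built exactly as before; fixing $\omega_m' \in (\tau_m', t_\infty)$ and applying Proposition~\ref{lemma-sap} $m$ times on the intervals $[\sigma_1',\sigma_2'], \ldots, [\sigma_{m-1}',\sigma_m'], [\sigma_m',\omega_m']$ produces $2^m$ pairwise disjoint sub-intervals $[s_k^-, s_k^+] \subset (0, s_\mu)$ whose images $\varphi_{[\sigma_1',\omega_m']}(\gamma([s_k^-, s_k^+]))$ are paths in a common topological rectangle $\mathcal{R}(r,R)$ joining its vertical sides.

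The new key ingredient is the construction of a continuum $\Gamma_\mu^H$ of initial conditions at $t = \omega_m'$ whose forward orbits remain positive on $(\omega_m', t_\infty)$ and satisfy $(v,v')\to(0,0)$ as $t\to t_\infty^-$. On the tail $[\omega_m', t_\infty)$ the weight $b$ is non-positive, and the assumption $\int_{\tau_m}^{+\infty} a(s)s^{N-1}\,ds = -\infty$ translates, after the change of variable, into $\int^{t_\infty} b^-(s)\,ds = +\infty$. Together with the Keller-Osserman-type condition $(g_\infty^*)$, this places us in a framework dual to that of Lemma~\ref{lem:mapaza}--Proposition~\ref{pro:continuo}, with the singular endpoint now corresponding to the asymptotic decay $(v,v')\to(0,0)$ rather than to blow-up. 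A Conley-Wa\.zewski argument applied to the superlevel sets of the energy $\tfrac12(v')^2 - \mu b^- G(v)$ then yields an unbounded continuum $\Gamma_\mu^H \subset [0,+\infty)\times\mathbb{R}$ with $\pi_1(\Gamma_\mu^H) = [0,+\infty)$, contained in $\overline{Q_4}$, and satisfying, for $\mu$ sufficiently large, a squeezing estimate $\Gamma_\mu^H \cap Q_1 \subset \overline{B(0,r)}$. Consequently $\Gamma_\mu^H \cap \mathcal{R}(r,R)$ contains a sub-continuum joining the top and the bottom sides of $\mathcal{R}(r,R)$.

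Invoking \cite[Lemma 3]{MulWil74} exactly as in the proof of Theorem~\ref{th-main1}, each of the $2^m$ sub-paths meets this sub-continuum, producing $2^m$ distinct parameters $s_1 < \ldots < s_{2^m}$ in $(0, s_\mu)$ (distinctness following from the disjointness of the sub-intervals). Each $s_k$ yields a radial function $u_{s_k}$ with $(u_{s_k}, u'_{s_k}) \to (0,0)$ at infinity, strictly positive on $[0,\sigma_1]$ because $a\le 0$ there forces $u_{s_k}$ to be non-decreasing from the value $s_k > 0$. The drop from $2^m$ to $2^m - 1$ comes from the fact that the trivial orbit $v\equiv 0$ is itself homoclinic, so $(0,0)$ lies in the closure of $\Gamma_\mu^H$: at most one of the $2^m$ intersections can match this distinguished degenerate element, and such an intersection would correspond to a solution $u_{s_k}$ that is extended by zero beyond a finite radius (hence not strictly positive). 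Discarding this single exceptional case gives at least $2^m - 1$ genuine positive radial homoclinic solutions.

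The main obstacle is the construction of $\Gamma_\mu^H$ with the prescribed localization in $\mathcal{R}(r,R)$. The Conley-Wa\.zewski machinery is classical, but deriving the sharp estimates paralleling Lemma~\ref{lem:scoppiano} and Proposition~\ref{pro:continuo} on the non-compact tail $[\omega_m', t_\infty)$ requires a careful time-map argument based on $(g_\infty^*)$ and on the divergence of $\int^{t_\infty} b^-$, together with a compactness step ensuring that the continuum actually crosses the horizontal sides of the topological rectangle. A secondary delicate point is the identification of the single lost solution, which ultimately relies on the injectivity of the shooting map $s \mapsto \gamma(s)$ together with the uniqueness of the degenerate intersection point at the origin of the phase plane at time $\omega_m'$.
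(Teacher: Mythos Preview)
Your strategy coincides with the paper's: shoot from $(s,0)$, desingularize on $[\sigma_1,+\infty[$, apply Proposition~\ref{lemma-sap} iteratively to obtain $2^m$ sub-paths in $\mathcal{R}(r,R)$ joining the vertical sides, and intersect with a Conley--Wa\.zewski continuum of asymptotic initial data at $t=\omega_m'$. The paper's sketch proceeds exactly this way, citing \cite[Lemma~5]{PapZan00} for the continuum and noting that the $k=1$ intersection is the origin, which accounts for the drop to $2^m-1$.

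Where you over-complicate matters is in the localization of the asymptotic continuum. You anticipate needing $\mu$-dependent squeezing estimates ``paralleling Lemma~\ref{lem:scoppiano} and Proposition~\ref{pro:continuo}'', together with $(g_\infty^*)$, to force $\Gamma_\mu^H\cap Q_1\subset\overline{B(0,r)}$. In fact none of this is required: the result quoted from \cite{PapZan00} already gives $\Gamma_\mu^\infty\subset Q_4$ with $(0,0)\in\Gamma_\mu^\infty$, for \emph{every} $\mu>0$, using only $(g_*)$ and the divergence condition $\int_{\omega_m'}^{t_\infty} b^-(t)\,dt = +\infty$ (which you correctly identify as the translation of the integral hypothesis on $a$). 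Since the continuum lies in $Q_4$, is unbounded, and contains the origin, it automatically joins the corner $(0,0)\in\mathcal{R}_{\text{top}}(r,R)\cap\mathcal{R}_{\text{left}}(r,R)$ to $\mathcal{R}_{\text{bot}}(r,R)$ inside $\mathcal{R}(r,R)$; no compactness step or time-map estimate is needed, and $(g_\infty^*)$ plays no role on the tail. The only $\mu$-threshold in the whole proof comes from the stretching proposition. Your identification of the lost solution is essentially correct: the sub-path labelled $k=1$ contains the forward image of $s=0$, which is $(0,0)$ at every time and hence lies on $\Gamma_\mu^\infty$ trivially, so that intersection need not produce a positive solution.
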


The strategy of the proof is very similar to the one of Theorem \ref{th-main1}, requiring now to look for intersections (in the phase-plane)
between the forward image of the positive $x$-semiaxis with a continuum of asymptotic solutions, which
can be easily found using the Conley-Wa\.zewski's method \cite{Con75,Waz47}. Let us mention that the idea of combining phase-plane analysis with the Conley-Wa\.zewski's method in order to obtain homoclinic solutions with complex behavior has been used in the recent papers \cite{BosDamPap15,DamPap->,EllZan13},
dealing however with different equations.

\begin{proof}[Sketch of the proof]
We use the same strategy as in the proof of Theorem \ref{th-main1} with minor changes.
Precisely, using the notation therein, by an iterative application of Proposition \ref{lemma-sap} we obtain the existence of 
$2^m$ disjoint intervals $[s_k^-,s_k^+]$, for $k=1,\ldots,2^m$, such that
$$
[s_k^-,s_k^+] \ni s \mapsto \varphi_{[\sigma_1',\omega_m']} (\gamma(s)) = \varphi_{[\tau_m',\omega_m']} \circ \varphi_{[\sigma_m',\tau_m']}
\circ \cdots \circ \varphi_{[\sigma_1',\tau_1']}(\gamma(s))  
$$
is a path contained in $\mathcal{R}(r,R)$ and joining the opposites sided
$\mathcal{R}_{\textnormal{left}}(r,R)$ and $\mathcal{R}_{\textnormal{right}}(r,R)$
(here $\omega_m'$ is fixed in such a way that $\tau_m' < \omega_m'$).

As for the dynamics on $[\omega_m',+\infty[\,$, instead, we use \cite[Lemma 5]{PapZan00}, ensuring (via an application of the Conley-Wa\.zewski's method) 
the existence of an unbounded continuum
$\Gamma_\mu^\infty \subset Q_4$, with $(0,0) \in \Gamma_\mu^\infty$, made by initial conditions $(v(\omega_m'),v'(\omega_m'))$ of solutions satisfying
$(v(t),v'(t)) \to (0,0)$ for $t \to +\infty$.  Notice that, according to \cite[Lemma 5]{PapZan00}, such a continuum exists since
$$
\int_{\omega_m'}^{+\infty} b(t)\,dt = \int_{h^{-1}(\omega_m')}^{+\infty} a(s)s^{N-1}\,ds = -\infty.
$$

At this point, the intersection lemma gives an intersection between the path $[s_k^-,s_k^+] \ni s \mapsto \varphi_{[\sigma_1',\omega_m']} (\gamma(s))$
and the continuum $\Gamma_\mu^\infty$: now, one intersection is just the origin $(0,0)$ (namely, the one for $ k = 1 $),
while all the other ones are non-trivial and give rise to the desired $2^m-1$ positive homoclinic solutions.
\end{proof}
\section*{Acknowledgements}\label{acknowledgements}
Alberto Boscaggin and Walter Dambrosio acknowledge the support of the
project ERC Advanced Grant 2013 n.~339958 ``Complex Patterns for Strongly Interacting Dynamical Systems - COMPAT''.
Walter Dambrosio is also supported by the P.R.I.N. Project ``Variational and perturbative aspects of nonlinear differential problems''.
This work is supported by the GNAMPA Project 2016
``Pro\-ble\-mi differenziali non lineari: esistenza, molteplicit\`{a} e propriet\`{a} qualitative delle soluzioni''.





\end{document}